\newcommand{\be}{\begin{equation}}
\newcommand{\ee}{\end{equation}}
\newcommand{\atp}[2]{\left. #1 \right|_{#2}}
\newcommand{\rd}[1]{\left(#1\right)}
\newcommand{\cl}[1]{\left\{#1\right\}}
\newcommand{\ov}[1]{\frac{1}{#1}}
\newcommand{\mbc}{\mathbb{C}}
\newcommand{\mbp}{\mathbb{P}}
\newcommand{\mbz}{\mathbb{Z}}
\newcommand{\mca}{\mathcal{A}}
\newcommand{\mcb}{\mathcal{B}}
\newcommand{\mce}{\mathcal{E}}
\newcommand{\mcf}{\mathcal{F}}
\newcommand{\mci}{\mathcal{I}}
\newcommand{\mcj}{\mathcal{J}}
\newcommand{\mck}{\mathcal{K}}
\newcommand{\mcl}{\mathcal{L}}
\newcommand{\mcm}{\mathcal{M}}
\newcommand{\mco}{\mathcal{O}}
\newcommand{\mcp}{\mathcal{P}}
\newcommand{\mcq}{\mathcal{Q}}
\newcommand{\mcs}{\mathcal{S}}
\newcommand{\mcv}{\mathcal{V}}
\newcommand{\mcw}{\mathcal{W}}
\newcommand{\mcx}{\mathcal{X}}
\newcommand{\mcy}{\mathcal{Y}}
\newcommand{\msf}{\mathscr{F}}
\newcommand{\msl}{\mathscr{L}}
\newcommand{\mss}{\mathscr{S}}
\newcommand{\mfg}{\mathfrak{g}}
\newcommand{\mfh}{\mathfrak{h}}
\newcommand{\mfl}{\mathfrak{l}}
\newcommand{\mfm}{\mathfrak{m}}
\newcommand{\mfs}{\mathfrak{s}}
\newcommand{\mfu}{\mathfrak{u}}
\newcommand{\lam}{\lambda}
\newcommand{\gam}{\gamma}
\newcommand{\bet}{\beta}
\newcommand{\alp}{\alpha}
\newcommand{\sig}{\sigma}
\newcommand{\eps}{\epsilon}
\newcommand{\ull}{\underline{\ell}}
\newcommand{\ulm}{\underline{m}}
\newcommand{\uly}{\underline{y}}
\newcommand{\ulb}{\underline{b}}
\newcommand{\ulD}{\underline{D}}
\newcommand{\uxi}{\underline{\xi}}
\newcommand{\ueta}{\underline{\eta}}
\newcommand{\uzeta}{\underline{\zeta}}
\newcommand{\tr}{\text{tr}}
\newcommand{\pic}{\text{Pic}}
\newcommand{\pr}{\text{pr}}
\newcommand{\lamtwo}{\Lambda^2}
\newcommand{\tx}[1]{\text{#1}}
\newcommand{\abs}[1]{\left| #1 \right|}
\newcommand{\pmt}[1]{\begin{pmatrix} #1 \end{pmatrix}}
\newtheorem{theorem}{Theorem}[section]
\newtheorem{thm}{Theorem}[section]
\newtheorem{lem}[theorem]{Lemma}
\newtheorem{prop}[theorem]{Proposition}
\newtheorem{defn}[theorem]{Definition}
\newtheorem{rmk}[theorem]{Remark}
\title{Spectral data for SU(1,2) Higgs bundles}
\author{Xuesen Na}
\address{4176 Campus Dr - William E. Kirwan Hall, College Park, MD 20742-4015, USA
}
  \email{xna@umd.edu}
\date{\today}
\begin{document}

\begin{abstract}
In this article we give an explicit description of the Hitchin fiber of SU(1,2) Higgs bundles $(L,F,\gam,\bet)$ over a compact Riemann surface $X$ of genus $\ge 2$ with $q=\gam\circ\bet$ having simple zeros and Toledo invariant $\tau=2 \deg L$ satisfying $\abs{\deg L}<g-1$. In particular we identify the data in an SU(1,2) Higgs bundle as a Hecke transformation $\iota: F\to L^{-2}K\oplus LK$ at $Z(q)$. The Hitchin fiber is identified with a fiber bundle over $\pic^d X$ with unirational fiber, which is a GIT-quotient of a $\mbc^\times$-action on $\rd{\mbp^1}^{4g-4}$. The base parametrizes choice of the line bundle $L$ and the fiber gives parameters for the Hecke transformation. The stable locus is shown to be a coarse moduli space of the corresponding moduli functor.
\end{abstract}

\maketitle

\section{Introduction}
\label{sec:intro}

Let $X$ be a compact Riemann surface of genus $g\ge 2$ and let $G$ be a noncompact real form of semisimple Lie group $G^c$ with Cartan decomposition of its Lie algebra $\mfg=\mfh\oplus\mfm$. Let $H\subset G$ be corresponding maximal compact subgroup and $H^c\subset G^c$ its complexification.

A {\it $G$-Higgs bundle} (see \cite{BG-PG06}) consists of a pair $(P,\Phi)$ where $P$ is a principal $H^c$-bundle over $X$ and $\Phi$ a holomorphic section of $P\times_{\tx{Ad}}\mfm^c\otimes K_X$ where Ad$:H^c\to \tx{GL}(\mfm^c)$ is the isotropy representation and $K_X$ canonical line bundle. The moduli space $\mcm_G$ of polystable $G$-Higgs bundles is related by the celebrated non-Abelian Hodge correspondence (for complex semisimple Lie group established by \cite{Hit87a}, \cite{Sim88}, \cite{Don87} and \cite{Cor88} and for real forms by \cite{BG-PM03}) to representation of fundamental group in corresponding Lie group. The Hitchin map on the moduli space of $G$-Higgs bundles (see e.g. \cite{G-PP-NR18}) is a proper map to a vector space given by
\be
h: \mcm_G\to \mcb_G=\bigoplus_{i=1}^a H^0(X,K_X^{\otimes m_i})
\ee
where $a$ is the real rank of $G$ and $m_i$ are exponents, evaluates the Higgs field on a basis of $G$-invariant polynomials on $\mfg$. For complex Lie group $G^c\subset \tx{GL}(n,\mbc)$, $h: \mcm_{G^c}\to \mcb_{G^c}$ may be interpreted as taking coefficients of the characteristic polynomial. This map plays important role in many aspects of the Higgs bundle, for instance realizing $\mcm_{G^c}$ for classical groups $G^c$ as an algebraically completely integrable system with fibers $h^{-1}(b)$ over generic points $b\in \mcb_{G^c}$ an abelian variety of some line bundles on the spectral curve (see \cite{Hit87b}). 

This article is concerned with the study of spectral data, i.e. a description of Hitchin fibers $h^{-1}(b)$ in terms of data on the spectral curve. For $G$ a split real form of semisimple Lie group $\mcm_G$ is closely related to the higher Teichm\"uller theory initiated by Hitchin's seminal paper \cite{Hit92}. The spectral data for split real forms along with some other non-compact real forms such as U(p,p), SU(p,p), and Sp(2p,2p) has been studied by \cite{Sch13, HS14, Sch15}.

On the other end of the spectrum, there seems to be no explicit description of the Hitchin fiber of $G$-Higgs bundle for $G$ noncompact real form with low real rank. In this paper I will present such a result for $G=$SU(1,2) a real form of $G^c=\tx{SL}(3,\mbc)$. An SU(1,2) Higgs bundles may be viewed as SL(3,$\mbc$) Higgs bundles $(E,\Phi)$ where
\be
E=L\oplus F,\,\,\Phi=\pmt{ & \gam \\ \bet & }
\ee
with $L$ and line bundle, $F$ a rank two vector bundle and $\lamtwo F=L^{-1}$ and $q=\gam\circ\bet$ a quadratic differential is the image of the Hitchin map. The SU(1,2)-Higgs are classified topologically by $d=$deg$L$, which is related to Toledo invariant by $\tau=2d$ \cite{BG-PG03}. For semistable SU(1,2) Higgs bundles, this satisfies a Milnor-Wood type inequality 
\be
\abs{d}\le (g-1)
\ee
due to \cite{DT87}. For $q\in H^0(X,K_X)$ with simple zeros $x_1,\ldots,x_{4g-4}$, $\bet$ (resp. $\gam$) can have at most simple zeros at $D_\bet$ (resp. $D_\gam$) with $D-D_\bet-D_\gam\ge 0$. The integers $d_\bet=\deg D_\bet$, $d_\gam=\deg D_\gam$ allow us to refine the Toledo bound to a criterion for stability (see Prop \ref{prop:su12stab2}) $(F,\bet,\gam)$ is stable SU(1,2) Higgs bundle iff
\begin{align}
& d_\gam<2(g-1+d)\,,\nonumber \\
& d_\bet<2(g-1-d)\,.
\end{align}
The main theorem below gives a description of Hitchin fiber $h^{-1}(q)$ as a fiber bundle with unirational fiber over the Jacobian variety $\tx{Pic}^d X$, where the fiber is a GIT quotient of $\mbc^\times$-action on $\rd{\mbp^1}^{4g-4}$.

Fix $\msl$ a Poincar\'e line bundle for $X$ with degree $d$ over $X\times\pic^d X$ and let
\be
\mcv=\msl^{-2}\mck\oplus \msl\mck
\ee
where $\mck=\pr_X^\ast K_X$ with $\pr_X:X\times \pic^d X\to X$ the projection. Let $\mcv_{x_j}=\iota_{x_j}^\ast \mcv$ with $\iota_{x_j}: \pic^d X\to X\times \pic^d X$, $\ell\mapsto \rd{x,\ell}$ and let
\be
\mcp=\mcp_1\underset{\pic^d X}{\times}\ldots\underset{\pic^d X}{\times}\mcp_{4g-4}, \tx{ where }\,\, \mcp_j=\mbp\rd{\mcv_{x_j}^\ast}\,.
\ee
This gives $p_\mcp: \mcp\to \pic^d X$, a fiber bundle with fibers isomorphic to $\rd{\mbp^1}^{4g-4}$. Let $p_j: \mcp\to\mcp_j$ the projection and $\mco_{\mcp_j}\rd{1}$ the invertible sheaf associated to the projectivised bundle. The line subbundles $\msl_{x_j}^2\oplus 0$ (resp. $0\oplus \msl_{x_j}^{-1}$) of $\mcv_{x_j}^\ast\cong \msl_{x_j}^2\oplus \msl_{x_j}^{-1}$ gives global sections which we denote by $[0:1]$ (resp. $[1:0]$) of $\mcp_j$ over $\pic^d X$. Let $\mcy\subset \mcp$ be characterized by 
\be \label{eq:defy}
\mcy=\left\{\ull=\rd{\ell_1,\ldots,\ell_{4g-4}}\in\mcp \middle|
\begin{array}{l}
\ell_j\in \mcp_j, \\
n_1(\ull)<2(g-1+d), \\
n_2(\ull)<2(g-1-d),
\end{array}
\right\}
\ee
where $n_1(\ull)$ (resp. $n_2(\uly)$) is number components $\ell_j\in [0:1]$ (resp. $[1:0]$). Let $\sig:\mbc^\times\times \mcp\to\mcp$ be $\mbc^\times$-action preserving fibers $\atp{\mcp_j}{p}$ for $p\in \pic^d X$ and given by 
\be
[x_0:x_1]\mapsto [cx_0:x_1]
\ee
on $\atp{\mcp_j}{p}\cong \mbp^1$ under identification respecting above decomposition of $\mcv_{x_j}^\ast$.

Let $\msl'$ be a very ample line bundle on $\pic^d X$ such that $\msl_{x_j}^{-2}\msl'$, $\msl_{x_j}\msl'$ are generated by global sections for all $j$. Take $\ell_0\in \pic^d X$ a base point and $p_0\in \atp{\mcp}{\ell_0}$ such that $p_0=\rd{p_{0,1},\ldots,p_{0,4g-4}}$ with $p_{0,j}\in [0:1]$ for all $j$, a fixed point of $\mbc^\times$-action.

\begin{thm} \label{thm:main}
The moduli problem of stable SU(1,2)-Higgs bundle with fixed quadratic differential $q$ and Toledo invariant $\tau=2d$ has a coarse moduli space given by
\be
\mcm_{d,q}^{\tx{S}}=\mcy/\mbc^\times
\ee
Furthermore $\mcm_{d,q}^{\tx{S}}$ has a natural compactification given by a GIT quotient $\mcp\sslash \mbc^\times$, defined by a linearization of $\mbc^\times$-action on 
\be
\tilde\msl=\rd{\bigotimes_{j=1}^{4g-4}p_j^\ast \mco_{\mcp_j}(4g-4)}\otimes p_\mcp^\ast \rd{\msl'}^{\otimes (4g-4)}
\ee
characterized by action on fiber $\atp{\tilde\msl}{p_0}$:
\begin{align}
& \mbc^\times\times \atp{\tilde\msl}{p_0}\to \atp{\tilde\msl}{p_0}\nonumber \\
& \rd{c,v}\mapsto c^{-n}v
\end{align}
with
\be
n=2(g-1+d)
\ee
where the stable loci $\mcp^{\tx{S}}\rd{\tilde\msl}\cong \mcy/\mbc^\times$ and the strictly semistable loci $\mcp^{\tx{SS}}\rd{\tilde\msl}-\mcp^{\tx{S}}\rd{\tilde\msl}$ is bijective to the set of strictly polystable SU(1,2)-Higgs bundles with fixed $q$ and $d$. 

\end{thm}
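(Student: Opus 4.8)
The plan is to deduce the theorem from the Hecke-modification description of SU(1,2)-Higgs bundles recalled above, by rewriting ``isomorphism classes of stable Higgs bundles with fixed $d$ and $q$'' as ``orbits of the $\mbc^\times$-action $\sig$ on $\mcy$'', and then recognising $\mcy$, resp.\ its closure, as the stable, resp.\ semistable, locus of an explicit GIT problem on $\mcp$.

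\emph{Set-theoretic identification.} For $L\in\pic^d X$, an SU(1,2)-Higgs bundle $(L,F,\gam,\bet)$ with $\gam\circ\bet=q$ is the same datum as a Hecke modification $\iota\colon F\hookrightarrow L^{-2}K\oplus LK$ supported at $Z(q)$ whose determinant is a nowhere-zero multiple of $q$; since at each $x_j$ such an $\iota$ is determined by a line of $(L^{-2}K\oplus LK)_{x_j}$, i.e.\ by a point of $\mcp_j|_L=\mbp(\mcv_{x_j}^\ast)|_L$, the totality of these data (as $L$ varies) is exactly $\mcp$. Two data yield isomorphic Higgs bundles precisely when they differ by an automorphism of $L$ together with the ensuing rescaling of $\gam$ needed to restore $\gam\circ\bet=q$; I would verify that this is exactly the equivalence cut out by $\sig$, so that the isomorphism classes with invariants $(d,q)$ form $\mcp/\mbc^\times$. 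By Prop.~\ref{prop:su12stab2} the divisors $D_\gam,D_\bet$ are read off from which $\ell_j$ lie on the fixed sections $[0:1],[1:0]$, namely $d_\gam=n_1(\ull)$ and $d_\bet=n_2(\ull)$, so the stability inequalities become verbatim the defining conditions of $\mcy$. Finally $n_1(\ull)+n_2(\ull)<2(g-1+d)+2(g-1-d)=4g-4$ on $\mcy$, so some coordinate of every $\ull\in\mcy$ is not $\sig$-fixed; hence $\sig$ acts freely on the open invariant subset $\mcy\subset\mcp$ and $\mcy/\mbc^\times$ exists as a geometric quotient.

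\emph{Coarse moduli property.} I would build a family over $X\times\mcp$ from the tautological Hecke modification, cut out along each $\{x_j\}\times\mcp$ by the tautological sub-line-bundle $\mco_{\mcp_j}(-1)$; fibrewise this is an SU(1,2)-Higgs bundle with quadratic differential proportional to $q$, and the proportionality is absorbed into $\sig$, so over $\mcy$ one gets a family, $\sig$-equivariant up to isomorphism, of the stable Higgs bundles with invariants $(d,q)$. This yields a natural transformation from the moduli functor to $\underline{\mcy/\mbc^\times}$, bijective on closed points by the previous step; conversely, for a family of such stable Higgs bundles over a base $S$ the associated Hecke data vary algebraically (they are classified by a relative Quot scheme), giving a morphism $S\to\mcy$ and thence to $\mcy/\mbc^\times$. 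For universality, the classifying morphism $\mcy\to M'$ attached to any natural transformation $\mcm\to\underline{M'}$ is $\mbc^\times$-invariant and hence factors through $\mcy/\mbc^\times$; so $\mcm^{\tx S}_{d,q}=\mcy/\mbc^\times$ is a coarse moduli space.

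\emph{The GIT compactification.} I would run GIT for $\sig$ on $\mcp$ with polarisation $\tilde\msl$. Ampleness of $\tilde\msl$ is a routine check: each $p_j^\ast\mco_{\mcp_j}(1)$ is relatively ample over $\pic^d X$, and the hypotheses that $\msl_{x_j}^{-2}\msl'$ and $\msl_{x_j}\msl'$ be globally generated are exactly what makes the twist by the chosen power of the very ample $\msl'$ render the tensor product ample on $\mcp$. The $\mbc^\times$-linearisation is pinned down by the weight $-n$ on $\tilde\msl|_{p_0}$ with $n=2(g-1+d)$. The Hilbert--Mumford test involves only $t\mapsto t^{\pm1}$: under $t\to 0$ every coordinate not equal to $[1:0]$ flows to $[0:1]$, so the weight of $\tilde\msl$ at the limit is, up to the global shift by $n$, an affine function of $n_1(\ull)$, and symmetrically under $t\to\infty$ with $n_2(\ull)$; using $(4g-4)-2(g-1+d)=2(g-1-d)$ these two conditions read $n_1(\ull)\le 2(g-1+d)$ and $n_2(\ull)\le 2(g-1-d)$, strict for stability. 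Hence $\mcp^{\tx S}(\tilde\msl)=\mcy$ (so $\cong\mcy/\mbc^\times$ after quotient) and $\mcp^{\tx{SS}}(\tilde\msl)$ is the non-strict locus. For the boundary: a strictly semistable $\ull$ that is not $\sig$-fixed leaves the semistable locus under one of $t\to 0,\infty$ and under the other tends to a $\sig$-fixed semistable point, necessarily with $n_1=2(g-1+d)$ and $n_2=2(g-1-d)$, which therefore represents its $S$-equivalence class; I would then match these fixed points with the strictly polystable SU(1,2)-Higgs bundles with invariants $(d,q)$ — whose splitting into sub-Higgs bundles is governed by which $\ell_j$ lie at $[0:1]$ or $[1:0]$ — to obtain the stated bijection.

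\textbf{Main obstacle.} The crux is the functorial half: producing the tautological family over $X\times\mcy$ with the correct normalisation of the quadratic differential and checking, via algebraicity of the Hecke data in families, that it upgrades the bijection on points to the coarse moduli property; a close second is the precise dictionary between strictly semistable boundary orbits and strictly polystable Higgs bundles. By contrast, once $n$ and the polarisation powers are fixed, the Hilbert--Mumford weight computation is routine.
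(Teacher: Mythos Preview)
Your outline is correct and follows the same architecture as the paper: build the tautological family of Hecke modifications on $X\times\mcy$, show that isomorphism of the resulting Higgs bundles is exactly the $\mbc^\times$-orbit relation (this is the paper's Prop.~\ref{prop:cstarorb}), identify $\mcy$ with the GIT-stable locus for the stated linearisation, and read off the strictly polystable boundary from the fixed semistable points (Prop.~\ref{prop:polystab}).

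The execution differs in two places. First, for the (semi)stable locus you invoke Hilbert--Mumford, whereas the paper (Lemma~\ref{lem:identifysss}) writes down a monomial basis of $H^0(\mcp,\tilde\msl^r)$, determines which monomials are $\mbc^\times$-invariant via the balancing condition $\sum_j m_j/(Nr)=n$, and checks non-vanishing and closedness of orbits by hand; your route is cleaner for a torus action on a product of $\mbp^1$'s, while the paper's is more self-contained. Second, for the coarse moduli property you argue directly (natural transformation plus universality), while the paper packages the same ingredients through Newstead's criterion (Thm.~\ref{thm:Newstead}): it proves the \emph{local universal property} of the family in \S\ref{sec:univprop} by an explicit Smith-normal-form argument at each $x_j$ (rather than appealing to a relative Quot scheme), and then feeds this, together with Prop.~\ref{prop:cstarorb} and $\mcy=\mcp^{\tx S}(\tilde\msl)$, into Newstead. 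Your ``Main obstacle'' is thus exactly where the paper spends its effort, and your Quot-scheme sketch would need to be fleshed out to the level of \S\ref{sec:univprop}; otherwise the two arguments are interchangeable.
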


The key argument is the interpretation of the data contained in SU(1,2) Higgs bundle on $L\oplus F$ corresponding to quadratic differential $q$ with that of a Hecke transformation of vector bundle $V=L^{-2}K_X\oplus LK_X$ at $D=Z(q)$. A Hecke transformation of a vector bundle over a compact Riemann surface can be defined by a locally free subsheaf of same rank where the inclusion is isomorphism at all but finitely many points. It follows from the construction that for the point $\ull=\rd{\ell_1,\ldots,\ell_{4g-4}}\in \mcp$ in the fiber over $L\in \pic^d X$ corresponding to $(F,\bet,\gam)$, there is a correspondence 
\begin{itemize}
\item $\ell_j\in [0:1]$ iff $\gam(x_j)=0$,
\item $\ell_j\in [1:0]$ iff $\bet(x_j)=0$,
\end{itemize}
which explains both the conditions in Eq.(\ref{eq:defy}) defining $\mcy$ and the choice of linearization in Thm \ref{thm:main}.

The rest of the proof proceeds along somewhat similar line as the construction of moduli space of semistable vector bundle with fixed rank and degree (see \cite{Ses10}, \cite{New78}). We first construct a family of stable SU(1,2) Higgs bundle parametrized by $\mcy$ with `local universal property' (\S 2.4 \cite{New78}) then apply the following which combines Prop 2.13 and Theorem 3.12 of \cite{New78}.

\begin{thm} \label{thm:Newstead}
Let $\mcx$ be a family of objects in a given moduli problem parametrized by variety $S$ with local universal property. Suppose reductive Lie group $G$ acts on $S$ with some linearisation on an ample line bundle $\mcl$, such that
\begin{itemize}
\item $\mcx_s\sim \mcx_t$ iff $s, t\in S$ lie on the same $G$-orbit 
\item each point of $S$ is stable with respect to this $\mcl$-linear $G$-action
\end{itemize}
then the GIT quotient $S\sslash G$ is a coarse moduli space.
\end{thm}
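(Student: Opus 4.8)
The plan is to verify directly the two defining properties of a coarse moduli space for the functor attached to the given moduli problem: writing $\msm$ for the contravariant functor sending a variety $T$ to the set of equivalence classes of families over $T$, I must produce a natural transformation $\Phi\colon\msm\to\Hom(-,M)$ with $M=S\sslash G$ such that (i) $\Phi_{\tx{pt}}$ is a bijection between $\msm(\tx{pt})$ (equivalence classes of objects) and the closed points of $M$, and (ii) $\Phi$ is initial among natural transformations from $\msm$ to functors of points. The single geometric input I would invoke is Mumford's GIT: since \emph{every} point of $S$ is stable, the quotient $\pi\colon S\to M=S\sslash G$ is a geometric quotient, hence a categorical quotient; in particular $\pi$ is surjective, its fibres are exactly the $G$-orbits, and any $G$-invariant morphism out of $S$ factors uniquely through $\pi$. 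Everything else is formal bookkeeping built on this fact together with the hypotheses, and since we work with reduced varieties a morphism to a separated variety is determined by its effect on closed points.

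To construct $\Phi$ I would start from a family $\mcy$ over an arbitrary variety $T$. The local universal property supplies, for each $t\in T$, an open neighbourhood $U$ and a morphism $f_U\colon U\to S$ with $\atp{\mcy}{U}\sim f_U^\ast\mcx$; set $\phi_U=\pi\circ f_U$. On an overlap $U\cap U'$ and a point $u$ in it, $\mcx_{f_U(u)}\sim\mcy_u\sim\mcx_{f_{U'}(u)}$, so by the first hypothesis $f_U(u)$ and $f_{U'}(u)$ lie in one $G$-orbit and $\pi(f_U(u))=\pi(f_{U'}(u))$; thus the $\phi_U$ agree on closed points of overlaps, hence as morphisms, and glue to a morphism $\phi_\mcy\colon T\to M$. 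Independence of the charts is checked the same way, and compatibility with pullback along $g\colon T'\to T$ is immediate from the functoriality of the local data, giving the naturality of $\Phi\colon\mcy\mapsto\phi_\mcy$; applied to the tautological family $\mcx$ over $S$ (with chart the identity) it gives $\phi_\mcx=\pi$. Property (i) is then quick: $\Phi_{\tx{pt}}$ is surjective because any object, viewed as a family over a point, is equivalent to some $\mcx_s$ by local versality and $\pi$ is onto; it is injective because $\mcx_s\sim\mcx_t$ iff $s,t$ share a $G$-orbit iff $\pi(s)=\pi(t)$, the second equivalence being the orbit description of the fibres of the geometric quotient.

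For the universal property (ii), let $N$ be a variety with a natural transformation $\Psi\colon\msm\to\Hom(-,N)$, and set $\psi_\mcx=\Psi_S(\mcx)\colon S\to N$. By naturality, for a closed point $s$ one has $\psi_\mcx(s)=\Psi_{\tx{pt}}(\mcx_s)$, and since $\Psi_{\tx{pt}}$ is a map out of the set of equivalence classes it sends equivalent objects to the same point; as $\mcx_s\sim\mcx_{g\cdot s}$ for all $g\in G$, the morphism $\psi_\mcx$ is $G$-invariant on closed points, hence $G$-invariant. Because $\pi$ is a categorical quotient, $\psi_\mcx$ factors as $\psi_\mcx=\chi\circ\pi$ for a unique morphism $\chi\colon M\to N$. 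That $\chi\circ\phi_\mcy=\psi_\mcy$ for every family is checked locally: on a chart $f_U$ one computes $\chi\circ\phi_U=\chi\circ\pi\circ f_U=\psi_\mcx\circ f_U=\Psi_U(f_U^\ast\mcx)=\Psi_U(\atp{\mcy}{U})=\atp{\psi_\mcy}{U}$, using naturality of $\Psi$ and that it respects equivalence. Uniqueness of $\chi$ follows by evaluating on the tautological family, where $\phi_\mcx=\pi$ is surjective: any $\chi'$ with the required property satisfies $\chi'\circ\pi=\psi_\mcx=\chi\circ\pi$, forcing $\chi'=\chi$.

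I expect the only genuine obstacle to be the geometric statement that stability of all points makes $\pi$ a geometric, hence categorical, quotient with fibres the $G$-orbits, which I would take from GIT rather than reprove; the remaining steps, although they must be carried out with care (well-definedness of the gluing, naturality in $T$, and the reduction to closed points over reduced varieties), are essentially formal. In the present setting this is exactly the combination of Newstead's Proposition 2.13 (construction and bijectivity of $\Phi$) with Theorem 3.12 (the GIT quotient as a categorical quotient, yielding the universal property), so I would ultimately cite \cite{New78} for the full details.
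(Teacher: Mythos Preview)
Your proposal is correct and in fact goes beyond what the paper does: the paper gives no proof of this statement at all, merely recording it as the combination of Proposition~2.13 and Theorem~3.12 of \cite{New78} and citing that reference. Your sketch unwinds precisely those two ingredients---construction of the natural transformation $\Phi$ via local versality and the orbit condition (Newstead's 2.13), and the universal property via the categorical-quotient property of the geometric quotient on the stable locus (Newstead's 3.12)---and you correctly conclude by citing \cite{New78}, so there is nothing to correct.
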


The data in Thm \ref{thm:main} giving a description of Hitchin fiber may be viewed as arising from a semistable rank one coherent sheaf $M$ over the spectral curve $\Sigma=Z(\lam(\lam^2-q))\subset \tx{Tot}(K_X)$. In fact this is a special case of Prop 6.1 \cite{HP12} extending the work of \cite{BNR89} and \cite{Sim94} which states that SL$(n,\mbc)$-Hitchin fiber can be identified with moduli space of semi-stable sheaves over spectral curve. Let $\pi: \Sigma\to X$ be the projection. We have for $q$ with simple zeros, that $\Sigma=\Sigma_1\cup \Sigma_2$ with $\Sigma_1\cong X$ and $\Sigma_2$ is a branched double cover $X$ ramifying over $D=Z(q)$. $\Sigma$ is a nodal curve with nodes at $\pi^{-1}D$ and has normalization $p: \tilde\Sigma=\Sigma_1\coprod \Sigma_2 \to \Sigma$. In fact $p^\ast M$ is a line bundle over $\tilde\Sigma$ and the condition that $(\pi_\ast M,\pi_\ast \lam)=(E,\Phi)$ (with $\lam\in \pi^\ast K_X$ the tautological section) gives a stable SU(1,2) Higgs bundle relates its restriction on the two components and the only degrees of freedom is a choice of a line bundle along with matching parameter at the nodes. In this article we choose not to go into more detail from this point of view.

This paper is organized as follows. In \S \ref{sec:su12stab} we review definitiono of SU(1,2) Higgs bundle and and prove a numerical criterion for its stability. In \S \ref{sec:proj} we recall some facts about projective bundle crucial to the proof of the main theorem. In \S \ref{sec:construction} we construct a family of stable SU(1,2) Higgs bundles parametrized by $\mcy\subset \mcp$ and in \S \ref{sec:univprop} we show that this family satisfies local universal property. In \S \ref{sec:git} we construct a GIT quotient of a $\mbc^\times$ action on $\mcp$ and finish the proof of Theorem \ref{thm:main}. In \S \ref{sec:alt} we give an alternative description of the spectral data, in terms of local frames under which the Higgs field have a standard form.

The work in this paper grew out of a study of limiting configuration of Hitchin equation
\be
R(h)+\bet\wedge\bet_h^\dagger+\gam_h^\dagger\wedge\gam=0
\ee
for stable SU(1,2) Higgs bundle $(F,\bet,\gam)$, where $h$ is a hermitian metric on $F$ and $R(h)$ the curvature 2-form. In that work the Hecke tranformation construction is used to characterize solutions of decoupled SU(1,2) Hitchin equation
\begin{align}
& R(h)=0\,,\nonumber \\
& \bet\wedge\bet_h^\dagger+\gam_h^\dagger\wedge\gam=0\,,
\end{align}
as well as to construct the approximate solution where the characterization in \S \ref{sec:alt} is important.

The SU(1,2) Hitchin fiber considered here is a subvariety of the singular SL$(3,\mbc)$-Hitchin fiber. The Hecke transformation also plays a central role in the recent article \cite{Hor20} to study singular Hitchin fiber for SL$(2,\mbc)$, as well as an ongoing work \cite{HN21} with the author to extend the result to higher rank cases.

After the completion of this work, we became aware of \cite{P-N15} which studies the spectral data of SU($p+1,p$) Higgs bundles from the viewpoint of data associated to the cameral cover and has similar description of Hitchin fiber in the case $p=1$ in Theorem 5.7. However the present work differs from \cite{P-N15} in two aspects: we do not restrict to the `regular Higgs field' case, i.e. $d_\bet=d_\gam=0$, the top dimensional stratum in Eq.(\ref{eq:strat}) and we identify the fiber-bundle $\mcy\sslash\mbc^\times$ as a coarse moduli space instead of set of equivalence classes of Higgs bundles.

\noindent \textbf{Acknowledgement} \quad The author would like to thank Richard Wentworth for suggesting the topic and for his help and support throughout the preparation of the paper. The author also wishes to thank Johannes Horn for useful discussion. The author acknowledges the support of Patrick and Maguerite Sung Fellowship.

\section{Preliminary}
\label{sec:prelim}

\subsection{SU(1,2) Higgs bundles and stability}
\label{sec:su12stab}

We begin by a brief review of definition and properties of real forms of Lie algebra and Lie groups. We direct the reader to Chapter VI of \cite{Kna02} or \cite{Sch13} \S 3.1.1 for a more detailed overview. Let $\mfg^c$ be a finite dimensional complex Lie algebra, a real Lie subalgebra $\mfg\subset \mfg^c$ is a real form if $\mfg^c=\mfg\oplus i\mfg$. There is a 1-1 correspondence between real forms and antilinear involutions (also called conjugation) $\sig: \mfg^c\to\mfg^c$. For complex semisimple Lie algebra $\mfg^c$, there exists compact real form $\mfu$, characterized by Killing form being negative definite and unique up to $\tx{Aut}_{\mbc}\mfg^c$. 

A Cartan involution $\theta: \mfg\to\mfg$ is a Lie algebra involution such that $B_\theta(X,Y)=-B(X,\theta Y)$ is positive definite where $B(X,Y=\tr\rd{\tx{ad}X\,\tx{ad}Y}$ is the Killing form. Cartan involution exist for real semisimple Lie algebra is unique up to $\tx{Aut}\mfg$. Let $\rho: \mfg^c\to\mfg^c$ be the conjugation corresponding to $\mfu$ a compact real form of $\mfg^c$, then the restriction to a noncompact real form $\mfg$ gives a Cartan involution. Let $\mfg=\mfh\oplus \mfm$ be eigenvalue decomposition of Cartan involution $\theta$ with eigenvalues $+1$ resp. $-1$, we have that $\mfh$ is a maximal compact subalgebra and $[\mfh,\mfm]\subset \mfm$.

Consider
\be
\mfg=\mfs\mfu(1,2)=\left\{ 
X\in\mfs\mfl(3,\mbc)\middle| X^\dagger J+JX=0,\,\, \tr X=0
\right\}
\ee
where $J=\tx{diag}(-1,1,1)$ and $X^\dagger$ is conjugate transpose of $X$. It is easy to verify that this is a real form of $\mfg^c=\mfs\mfl(3,\mbc)$ and the corresponding conjugation is given by
\be
X\mapsto -J X^\dagger J\,.
\ee
Fix $\mfs\mfu(3)\subset \mfs\mfl(3,\mbc)$ a compact real form and $X\mapsto -X^\dagger$ is the corresponding conjugation. Then the corresponding involution $\theta: X\mapsto -X^\dagger=JXJ$ on $\mfs\mfu(1,2)$ is a Cartan involution. Therefore we have corresponding Cartan decomposition given by $\mfs\mfu(1,2)=\mfh\oplus \mfm$,
\be
\mfh=\left\{\pmt{-\tr Y & 0 \\ 0 & Y}\middle| Y\in \mfu(2) \right\},\,\, \mfm=\left\{ \pmt{0 & Z \\ Z^\dagger & 0} \right\}
\ee
and correspondingly we have $H=S(U(1)\times U(2)$, $H^c=S(\tx{GL}(1,\mbc)\times \tx{GL}(2,\mbc))$ and
\be
\mfm^c=\left\{ \pmt{0 & x_1 & x_2 \\ x_3 & 0 & 0 \\ x_4 & 0 & 0}\middle|\,\, x_j\in\mbc \right\}\,.
\ee

Let $X$ be a closed Riemann surface of genus $g(X)\ge 2$ and $K_X$ the canonical line bundle. Let $G$ be a connected reductive real Lie group and $H\subset G$ a maximal compact subgroup and $\mfg=\mfh\oplus \mfm$ a Cartan decomposition. Denote by $H^c$ and $\mfm^c$ respective complexifications and $\iota: H^c\to GL(\mfm^c)$ the isotropy representation which is a restriction of the adjoint representation to $H^c$. Following \cite{BG-PG06} we have,

\begin{defn}
A $G$-Higgs bundle over $X$ is a pair $(P,\varphi)$ where $P$ is a holomorphic principal $H^c$-bundle over $X$ and $\varphi$ is a holomorphic section of $P\times_\iota \mfm^c\otimes K$ where $P\times_\iota \mfm^c$ is the bundle asssociated to $P$ via isotropy representation $\iota$.
\end{defn}

For $G\subset GL(n,\mbc)$ there is an associated rank $n$ Higgs bundle $(E,\Phi)$ where $E$ is associated vector bundle of $P$ and $\Phi$ is induced naturally from $\varphi$. By above discussion it is natural to define:

\begin{defn}
An SU(1,2) Higgs bundle is a triple $(F,\bet,\gam)$ where $F$ is a rank two holomorphic vector bundle over $X$, 
\begin{align}
&\bet\in H^0(X,\tx{Hom}(\lamtwo F^\ast,F)\otimes K_X)\\
&\gam\in H^0(X,\tx{Hom}(F,\lamtwo F^\ast)\otimes K_X)\,,
\end{align}
or equivalently 
\begin{align}
&\bet: \lamtwo F^\ast\otimes K_X^{-1}\to F\\
&\gam: F\to \lamtwo F^\ast\otimes K_X
\end{align}
are holomorphic homomorphisms. In this later form, we may compose them to get
\be
\gam\circ \bet: \lamtwo F^\ast\otimes K^{-1}\to \det F^\ast\otimes K
\ee
which is naturally a quadratic differential. Let $L=\lamtwo F^\ast$, we will also refer to $(F,\bet,\gam)$ as `an SU(1,2) Higgs bundle with quadratic differential $q$ and line bundle $L$'.
\end{defn}

Above definition is similar to that of \cite{BG-PG03}, where an SU(p,q)-Higgs bundle is given by a tuple $(V,W,\bet,\gam)$ where rank$V=p$, rank$W=q$ and $\lamtwo V=\lamtwo W^\ast$. In the case $(p,q)=(1,2)$, let $L=\lamtwo F^\ast$ and $(F,\bet,\gam)$ gives the tuple $(L,F,\gam,\bet)$ in convention of \cite{BG-PG03}. 

Given an SU(1,2) Higgs bundle $(F,\bet,\gam)$, the associated SL(3,$\mbc$) bundle is given by $(E,\Phi)$ with
\be
E=L\oplus F,\,\, \Phi=\pmt{0 & \gam \\ \bet & 0}
\ee
We recall the following from Lemma 2.2 of \cite{Got01}
\begin{prop}
Suppose $E'$ is a nonzero proper $\Phi$-invariant subbundle of $E$. Let $\pi_L$ (resp. $\pi_F$) be projection from $E'$ to $L$ (resp. $F$). Let $L'$ (resp. $F'$) be saturations of image of $\pi_L$ (resp. $\pi_F$), and $F''$ (resp. $L''$) saturations of kernel of $\pi_L$ (resp. $\pi_F$). Then $L'\oplus F'$, $L''\oplus F''$ both $\Phi$-invariant subbundle of $E$ and
\be
\mu\rd{E'}\le \mu\rd{L'\oplus F'}\tx{ or }\mu\rd{E'}\le \mu\rd{L''\oplus F''}
\ee
\end{prop}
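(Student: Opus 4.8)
The plan is to analyze a $\Phi$-invariant subbundle $E'$ by recording how it sits with respect to the direct sum decomposition $E = L \oplus F$, and then to show that one of the two "coordinate-aligned" subbundles $L' \oplus F'$ or $L'' \oplus F''$ is itself $\Phi$-invariant and has slope at least that of $E'$. First I would verify that $L' \oplus F'$ and $L'' \oplus F''$ are $\Phi$-invariant. Write $\Phi = \begin{pmatrix} 0 & \gam \\ \bet & 0 \end{pmatrix}$, so $\Phi$ sends the $L$-summand into $F \otimes K_X$ via $\bet$ and the $F$-summand into $L \otimes K_X$ via $\gam$. Since $E'$ is $\Phi$-invariant, applying $\pi_F$ to $\Phi(E')$ and $\pi_L$ to $\Phi(E')$ and chasing through the block structure shows that $\bet$ maps $L'$ into $F' \otimes K_X$ (because the image of $\pi_L$ on $E'$ is carried by $\bet$ into the image of $\pi_F$ on $\Phi(E') \subset E' \otimes K_X$), hence into $F' \otimes K_X$ after saturating; similarly $\gam(F') \subset L' \otimes K_X$, so $L' \oplus F'$ is $\Phi$-invariant. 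The same argument on kernels shows $F'' \subset F$ is sent by $\gam$ into a sub of $L''$ and $L'' \subset L$ is sent by $\bet$ into $F''$, using that $E' \supset L'' \oplus F''$ as the kernel part is the intersection of $E'$ with the summands; so $L'' \oplus F''$ is $\Phi$-invariant as well. (Here one uses that saturation commutes with taking images/preimages of holomorphic bundle maps up to finitely many points, which does not change degree.)

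Next comes the slope comparison, which is the heart of the argument. Let $r = \rk E'$ and let $E'$ have an associated filtration: the kernel of $\pi_L|_{E'}$ saturates to $F'' \oplus 0$-ish data inside $F$ (more precisely $F''$ is the saturation of $\ker \pi_L \subset E'$, viewed as a subsheaf of $F$), and $E'/(F'')$ injects, generically, into $L'$. There is a short exact sequence of sheaves $0 \to F'' \to E' \to \overline{L'} \to 0$ where $\overline{L'}$ is a subsheaf of $L'$ with the same rank (hence $\deg \overline{L'} \le \deg L'$), giving $\deg E' = \deg F'' + \deg \overline{L'} \le \deg F'' + \deg L'$, and $\rk E' = \rk F'' + \rk L'$. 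Symmetrically, using the projection to $F$, there is $0 \to L'' \to E' \to \overline{F'} \to 0$ with $\overline{F'} \subset F'$ of the same rank, giving $\deg E' \le \deg L'' + \deg F'$ with matching ranks. So $\mu(E') \le \frac{\deg F'' + \deg L'}{\rk F'' + \rk L'}$ and $\mu(E') \le \frac{\deg L'' + \deg F'}{\rk L'' + \rk F'}$, i.e. $\mu(E') \le \mu(L' \oplus F')$ and $\mu(E') \le \mu(L'' \oplus F'')$ — in fact we get \emph{both} inequalities, which is stronger than the claimed disjunction. One should double-check the degenerate cases separately: $\pi_L|_{E'} = 0$ (then $E' \subset F$, so take $L' = 0$, $F' = E'$), $\pi_F|_{E'} = 0$ (then $E' \subset L$, so $E' = L$ since $L$ has rank one and $E'$ is a subbundle of rank $\ge 1$), and the cases where $L'$ or $F'$ or the kernels are zero; in each of these one of the two bracketed subbundles literally equals $E'$ or contains it with equal slope.

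The main obstacle I anticipate is bookkeeping around saturations and the distinction between subsheaves and subbundles: $L'$ is the saturation of $\mathrm{im}(\pi_L)$, so the natural map $E' \to L'$ is not surjective as a sheaf map, only generically surjective, and likewise $F''$ is the saturation of a kernel that may fail to be a subbundle of $E'$ on a finite set. One must be careful that these finite-length discrepancies only \emph{help} the inequality (they lower $\deg E'$ relative to $\deg F'' + \deg L'$, never the reverse) and that the rank count is exact. A clean way to organize this is to pass to the open set $U \subset X$ where all the relevant sheaves are already subbundles and the maps have constant rank, establish the $\Phi$-invariance and the exact sequences over $U$, and then observe that $\Phi$-invariance of a subbundle and the slope inequality are both insensitive to what happens on $X \setminus U$ once we use the saturated (hence subbundle) versions globally. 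With that reduction in place, the rest is the elementary slope arithmetic above: if $a/b \le c/d$ and $a/b \le c'/d'$ fails, then mediant-type inequalities force the conclusion, but here since we obtain both inequalities directly from the two exact sequences, no mediant argument is even needed.
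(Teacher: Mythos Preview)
The paper does not give its own proof of this proposition; it simply cites it as Lemma~2.2 of \cite{Got01}. So there is no in-paper argument to compare against. That said, your proposal contains a genuine error in the slope step.

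Your exact sequence $0\to \ker(\pi_L|_{E'})\to E'\to \mathrm{im}(\pi_L|_{E'})\to 0$ yields, after saturating,
\[
\deg E'\le \deg F''+\deg L',\qquad \rk E'=\rk F''+\rk L',
\]
and symmetrically $\deg E'\le \deg L''+\deg F'$ with $\rk E'=\rk L''+\rk F'$. These are bounds by the \emph{mixed} bundles $L'\oplus F''$ and $L''\oplus F'$, not by $L'\oplus F'$ and $L''\oplus F''$. Your ``i.e.'' identifying $\frac{\deg F''+\deg L'}{\rk F''+\rk L'}$ with $\mu(L'\oplus F')$ silently swaps $F''$ for $F'$; these are different subbundles in general (indeed $F''\subsetneq F'$ is possible, with different rank and degree). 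Consequently your stronger conclusion that \emph{both} inequalities always hold is false. For a numerical check in the case $L''=0$, $L'=L$, $\rk F''=1$, $\rk F'=2$, set $\deg L'=3$, $\deg F'=6$, $\deg F''=1$, $\deg E'=4$: then the mixed bounds $4\le 3+1$ and $4\le 0+6$ are satisfied, and $\mu(E')=2\le 3=\mu(L'\oplus F')$, but $\mu(E')=2>1=\mu(L''\oplus F'')$.

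What is missing is exactly the mediant/averaging step you dismissed at the end: from $\mu(E')\le \mu(L'\oplus F'')$ and $\mu(E')\le \mu(L''\oplus F')$ one must argue that at least one of $\mu(E')\le \mu(L'\oplus F')$ or $\mu(E')\le \mu(L''\oplus F'')$ holds. One clean way (using that $L$ has rank one, so $\rk L',\rk L''\in\{0,1\}$): if $\rk L'=\rk L''$ then $L'=L''$ and $F'=F''$ by saturation, and there is nothing to prove; otherwise $L''=0$, $L'=L$, and a short computation from the two mixed bounds gives a contradiction if both desired inequalities fail. Your $\Phi$-invariance argument for $L'\oplus F'$ and $L''\oplus F''$ is fine.
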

By \cite{BG-PG03} the (poly)stability of SU(1,2) Higgs bundle is equivalent to that of associated SL$(3,\mbc)$ Higgs bundle. Therefore we may define:

\begin{defn} \label{def:su12stab1}
An SU(1,2) Higgs bundle $(F,\bet,\gam)$ is stable iff the following two conditions are satisfied:
\begin{itemize}
\item any proper subbundle $F'\subset F$ with $\gam|_{F'}=0$ satisfy deg$(F')<0$ 
\item any proper subbundle $F''\subset F$ with $\bet(\lamtwo F^\ast\otimes K_X^{-1})\subset F''$ satisfy deg$(F'')<d$
\end{itemize}
\end{defn}

We focus on the case where $q:=\gam\circ\bet\in H^0(X,K_X^2)$ has only simple zeros. Denote the zero divisor by $D$. Let $D=D_\bet+D_\gam+D_r$ with $D_\bet=Z(\bet)$, $D_\gam=Z(\gam)$. The three divisors has disjoint support since all zeros of $q$ are simple.

Let $d_\bet=\deg D_\bet$, $d_\gam=\deg D_\gam$, $d_r=\deg D_r$, we have
\be
4g-4=d_\bet+d_\gam+d_r\,.
\ee
Stability of SU(1,2) Higgs bundle may be characterized by integers $g$, $\deg L$, $d_\bet$ and $d_\gam$.

\begin{prop} \label{prop:su12stab2}
SU(1,2) Higgs bundle $(F,\bet,\gam)$ as above is stable iff 
\be \label{eq:stabdef1}
-(g-1)+\ov{2}d_\gam<d<(g-1)-\ov{2}d_\bet
\ee
equivalently
\begin{align}
& d_\bet<2(g-1-d)\,\tx{ and,} \label{eq:stabdef2}\\
& d_\gam<2(g-1+d) \label{eq:stabdef3}
\end{align}
\end{prop}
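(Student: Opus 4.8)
The plan is to derive the numerical criterion directly from Definition \ref{def:su12stab1} by analyzing which line subbundles of $F$ can violate either of the two conditions, and to extract the maximal possible degree of such a subbundle in terms of the effective divisors $D_\beta$, $D_\gamma$. First I would observe that since $F$ has rank two, any proper subbundle is a line bundle, so it suffices to check Definition \ref{def:su12stab1} on line subbundles $N\subset F$. For the first condition, $\gamma|_N=0$ means that the composite $N\hookrightarrow F \xrightarrow{\gamma} L^{-1}K_X$ vanishes. Dually, $\beta: L^{-1}K_X^{-1}\to F$ is a nonzero sheaf map (as $q=\gamma\circ\beta\neq 0$), and its image generates a line subbundle $N_\beta\subset F$; the key point is that $N_\beta$ is precisely the saturation of the image, and $\beta$ factors as $L^{-1}K_X^{-1}\to N_\beta$ vanishing exactly on $D_\beta$, so $\deg N_\beta = -d + (2g-2) - d_\beta$. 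I would then argue that $N_\beta$ is the \emph{unique} line subbundle on which $\gamma$ restricts to zero: indeed $\gamma\circ\beta = q$ has zeros only along $D=Z(q)$, and for any line subbundle $N$ with $\gamma|_N=0$ one shows $\beta$ must land in $N$ (otherwise $\gamma\circ\beta$ would be forced to have extra vanishing incompatible with $q$ having only simple zeros, using that $\gamma$ restricted to $F/N$ is injective as a sheaf map). Hence the first stability condition is equivalent to $\deg N_\beta < 0$, i.e. $-d + 2g-2 - d_\beta < 0$, which rearranges to Eq.(\ref{eq:stabdef2}).

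For the second condition, I would run the dual argument: a line subbundle $F''\subset F$ with $\beta(L^{-1}K_X^{-1})\subset F''$ corresponds, after quotienting, to the requirement that the map $L^{-1}K_X^{-1}\to F/F''$ vanish; equivalently, dualizing $F$ and using $\det F = L^{-1}$, the condition on $F''$ translates into a condition on a line subbundle of $F^\ast$ annihilated by the transpose of $\beta$. The line bundle $F/F''$ has degree $-d - \deg F''$, and $\gamma: F\to L^{-1}K_X$ passing through $F/F''$ (since $\gamma$ kills nothing forced, but $\gamma|_{F''}$ may be nonzero) — here the relevant map is $\gamma$ viewed as $F\to L^{-1}K_X$ whose kernel line subbundle $F''_\gamma$ is the saturation of $\ker\gamma$, on which $\beta$ automatically lands because $\gamma\circ\beta=q\neq 0$ forces $\mathrm{im}\,\beta\not\subset$... — more carefully, $\gamma$ induces $F/F''_\gamma \hookrightarrow L^{-1}K_X$ vanishing exactly on $D_\gamma$, so $\deg F''_\gamma = d - (2g-2) + d_\gamma$, and by an argument symmetric to the above $F''_\gamma$ is the unique maximal-degree subbundle containing $\mathrm{im}\,\beta$, so condition two becomes $\deg F''_\gamma < d$, i.e. $d - (2g-2) + d_\gamma < d$, which is Eq.(\ref{eq:stabdef3}). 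Finally I would note Eq.(\ref{eq:stabdef2}) and Eq.(\ref{eq:stabdef3}) together are literally Eq.(\ref{eq:stabdef1}) after dividing by $2$ and rearranging, and conversely that these are the only constraints since every destabilizing subbundle is dominated by $N_\beta$ or $F''_\gamma$.

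The main obstacle I anticipate is the uniqueness/maximality claim: showing that $N_\beta$ (resp. $F''_\gamma$) is the subbundle of largest degree among all line subbundles with $\gamma|_{(-)}=0$ (resp. all subbundles containing $\mathrm{im}\,\beta$), so that checking stability on these two distinguished subbundles suffices. This is where the hypothesis that $q$ has only simple zeros — equivalently that $D_\beta$, $D_\gamma$, $D_r$ have disjoint reduced support — really enters. If $N\subset F$ is any line subbundle with $\gamma|_N = 0$, then $N \subset \ker\gamma$ as sheaves, and since $\ker\gamma$ is a rank-one subsheaf with saturation a line subbundle $N_0$, we get $N\subseteq N_0$; the content is then to identify $N_0$ with $N_\beta$, for which I would use that $\beta$ factors through $\ker\gamma$ (because $\gamma\circ\beta$, being $q$ times the identity on a line, is not identically zero but $\gamma\circ\beta$ as a map to $L^{-1}K_X$ composed further... ) — actually the cleanest route is: $\mathrm{coker}(\beta: L^{-1}K_X^{-1}\to F)$ has torsion supported on $D_\beta$, and $\gamma$ vanishes on $N_\beta$ iff the induced map $N_\beta \to L^{-1}K_X$ is zero iff $q$ vanishes identically, contradiction — so in fact one must instead characterize $N_\beta$ as $\ker\gamma$'s saturation directly and compute its degree from $\deg F = -d$ together with the degree of $\mathrm{im}\,\gamma = L^{-1}K_X(-D_\gamma)$. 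I would handle this by the short exact sequence $0\to N_0 \to F \to L^{-1}K_X(-D_\gamma)\to 0$ (with $N_0 = \overline{\ker\gamma}$), giving $\deg N_0 = -d - (2g-2-d+d_\gamma) = -(2g-2) + d - d_\gamma$ — wait, this must be reconciled with the $\beta$-side computation, and the reconciliation (that $N_0$ on which $\gamma$ vanishes equals the saturation of $\mathrm{im}\,\beta$, forcing $d_r = 4g-4-d_\beta-d_\gamma$) is exactly the bookkeeping identity already recorded in the excerpt. Once that identification is pinned down, the inequalities follow immediately, so I would budget most of the write-up for this step and treat the rest as arithmetic.
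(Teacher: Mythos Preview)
Your overall strategy---identify for each condition in Definition~\ref{def:su12stab1} the unique extremal line subbundle and compute its degree---is exactly the paper's. The genuine gap is that you have swapped the two subbundles and then try to force them to coincide. The line subbundle relevant to the first condition ($\gamma|_{F'}=0$) is the saturation $\overline{\ker\gamma}$, \emph{not} the saturation $N_\beta=\overline{\mathrm{im}\,\beta}$; conversely the unique line subbundle containing $\mathrm{im}\,\beta$ is $\overline{\mathrm{im}\,\beta}$, \emph{not} $\overline{\ker\gamma}$. These are genuinely different line subbundles of $F$: at any $x\notin D$ one has $\gamma(x)\circ\beta(x)=q(x)\neq 0$, so $\mathrm{im}\,\beta(x)$ and $\ker\gamma(x)$ are distinct lines in $F_x$. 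Your attempted ``reconciliation'' (that $N_0=\overline{\ker\gamma}$ equals $\overline{\mathrm{im}\,\beta}$) therefore cannot hold, and the identity $d_r=4g-4-d_\beta-d_\gamma$ is just additivity of degrees on $D=D_\beta+D_\gamma+D_r$, not a coincidence of subbundles. This also explains why your arithmetic does not close: with $N_\beta$ assigned to the wrong condition, the inequality you obtain from $\deg N_\beta<0$ does not rearrange to Eq.~\eqref{eq:stabdef2}.

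Once the assignment is untangled the argument is the short one in the paper. Since $\gamma$ factors as $F\twoheadrightarrow LK_X(-D_\gamma)\hookrightarrow LK_X$, the kernel of the surjection is the \emph{unique} line subbundle $F'$ with $\gamma|_{F'}=0$, and $\deg F'=\deg F-\deg\bigl(LK_X(-D_\gamma)\bigr)=-2d-2(g-1)+d_\gamma$, so the first condition $\deg F'<0$ is precisely Eq.~\eqref{eq:stabdef3}. Dually $\beta$ factors as $LK_X^{-1}\hookrightarrow LK_X^{-1}(D_\beta)\hookrightarrow F$, so $F''\cong LK_X^{-1}(D_\beta)$ is the \emph{unique} line subbundle containing $\mathrm{im}\,\beta$, and the second condition yields Eq.~\eqref{eq:stabdef2}. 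No further maximality argument is needed beyond this uniqueness, and the simple-zeros hypothesis is used only to guarantee that $D_\beta$, $D_\gamma$ are reduced divisors giving these clean factorizations, not to identify the two subbundles.
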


\begin{proof}
Let $L=\det F^\ast$. If $D_\bet\neq 0$, then $\bet: LK_X^{-1}\to F$ factors through $\tilde \bet: LK_X^{-1}(D_\bet)\to F$ giving a subbundle $F''$. This is the unique proper subbundle of $F$ containing image of $\bet$ and we have $\deg F''=-d-2(g-1)+d_\bet$.

On the other hand $\gam: F\to LK_X$ factors through surjective sheaf map $\tilde \gam: F\to LK_X(-D_\gam)$. Let $F'$ be the kernel, a line subbundle. This is the unique proper subbundle of $F$ contained in the kernel of $\gam$ and we have $\deg F'=\deg F-\deg LK_X(-D_\gam)=-2d-2(g-1)+d_\gam$. Conclusion follows these.
\end{proof}

Note that by same argument we have that the associated SL$(3,\mbc)$ Higgs bundle is semistable iff Eq.(\ref{eq:stabdef1}) is satisfied with $<$ replaced by $\le$.

\begin{prop} \label{prop:polystab}
An SU(1,2) Higgs bundle is polystable iff it is either stable or 
\be \label{eq:strictpolystab}
d_\bet=2(g-1-\deg L),\,\, d_\gam=2(g-1+\deg L)
\ee
In the latter case, $F\cong L(D_\bet)K_X^{-1}\oplus L^{-2}(-D_\bet)K_X$.
\end{prop}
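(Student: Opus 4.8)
The plan is to use the structure theory for polystable objects: a polystable $G$-Higgs bundle that is not stable splits as a direct sum of stable $G$-Higgs bundles of smaller structure group, equivalently the associated $\tx{SL}(3,\mbc)$ Higgs bundle $(E,\Phi)$ becomes a direct sum of stable Higgs subbundles of slope $0$. First I would recall from Proposition \ref{prop:su12stab2} (and the remark following it) the exact numerical characterization of semistability: $(F,\bet,\gam)$ is semistable iff $d_\gam \le 2(g-1+d)$ and $d_\bet \le 2(g-1-d)$, with strict inequalities characterizing stability. So a strictly semistable SU(1,2) Higgs bundle is precisely one for which at least one of these two inequalities is an equality. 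The task is to show that polystability forces \emph{both} equalities simultaneously, i.e. Eq.(\ref{eq:strictpolystab}), and then to identify $F$.

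The key step is to analyze the destabilizing subbundle. Suppose $(F,\bet,\gam)$ is strictly semistable but not stable; then by the proof of Proposition \ref{prop:su12stab2} there is a canonical $\Phi$-invariant subbundle of $E = L\oplus F$ of slope exactly $0$, arising either from $F'' = L(D_\bet)K_X^{-1} \subset F$ (the saturated image of $\bet$, when $\deg F'' = -d - 2(g-1) + d_\bet = 0$, i.e. $d_\bet = 2(g-1-d)$) together with $L$, or from $F' \subset F$ the kernel line subbundle of $\tilde\gam$ (when $\deg F' = -2d - 2(g-1) + d_\gam = 0$, i.e. $d_\gam = 2(g-1+d)$). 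For polystability, the slope-$0$ $\Phi$-invariant subbundle $E'$ must be a direct summand as a Higgs bundle: $E = E' \oplus E''$ with $E''$ also $\Phi$-invariant of slope $0$. I would then check that in the first case, the complementary summand forces the line bundle $L$ to sit inside $E'$ and identifies $E'' $ with $L^{-2}(-D_\bet)K_X$, which is a line subbundle of $F$ on which $\bet$ lands in it... — more precisely, writing out $\Phi$ in block form on $E' \oplus E''$ and using that both summands are $\Phi$-invariant, one finds $F = F'' \oplus E''$ with $F'' = L(D_\bet)K_X^{-1}$ and $E'' \cong L^{-2}(-D_\bet)K_X$; comparing degrees, $\deg E'' = -2d - 2(g-1) + d_\bet$ must also be $0$, and combined with $\lamtwo F = L^{-1}$ this pins down $d_\gam$ and forces $d_\gam = 2(g-1+d)$ as well. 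By the symmetry between the $\bet$ and $\gam$ data one gets the same conclusion starting from the second case.

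The main obstacle I expect is bookkeeping the block decomposition of $\Phi$ with respect to the polystable splitting and verifying that the $\Phi$-invariant complement is forced to be of the claimed form rather than some other slope-$0$ subbundle — in particular ruling out the possibility that $E'$ is the whole of $E$ minus a subbundle not compatible with the $L\oplus F$ grading, and handling the interaction between the two canonical destabilizing subbundles when both inequalities are already equalities. Once the splitting $F \cong L(D_\bet)K_X^{-1} \oplus L^{-2}(-D_\bet)K_X$ is established, the converse (that such an $F$ with the stated degree conditions is polystable) is immediate: it is visibly a direct sum $\rd{L, L(D_\bet)K_X^{-1}, \bet', 0} \oplus (\text{rest})$ of stable pieces of slope $0$, so I would just exhibit the decomposition and check each summand is stable as a Higgs bundle of smaller structure group. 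Finally I would note $d_r = 4g-4 - d_\bet - d_\gam = 0$ in this case, consistent with $q$ having simple zeros, so $\bet$ and $\gam$ vanish on complementary divisors summing to all of $Z(q)$.
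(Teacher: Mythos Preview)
Your outline has the right shape but contains a genuine gap precisely where you flag the ``main obstacle.'' From the polystable splitting $E = E' \oplus E''$ with $E' = L \oplus F''$ of rank two and $E''$ a $\Phi$-invariant line bundle, nothing in your argument forces $E'' \subset F$: a priori $E''$ could sit diagonally in $L \oplus F$. Your degree claim that ``$\deg E'' = -2d - 2(g-1) + d_\bet$ must be $0$'' is (modulo a sign slip) automatic from $\deg E = \deg E' = 0$ and carries no information whatsoever about $d_\gam$; so ``combined with $\lamtwo F = L^{-1}$ this pins down $d_\gam$'' is simply false. The route to $d_\gam = 2(g-1+d)$ requires first establishing $E'' \subset F$, then using $\Phi$-invariance of $E''$ to get $\gam|_{E''} = 0$, whence $E''$ coincides with the canonical kernel line $F'$ and $\deg F' = 0$. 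The paper closes this gap with a spectral argument you do not mention: since $E''$ is a $\Phi$-invariant line bundle, $\Phi|_{E''}$ is multiplication by some $\lambda \in H^0(X,K_X)$ satisfying the characteristic equation $\lambda(\lambda^2 - q) = 0$, and because $q$ has only simple zeros it admits no global square root, so $\lambda = 0$; hence $E'' \subset \ker\Phi$, which at a generic point lies in $0 \oplus F$, forcing $E''\subset F$.

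Your converse is also incomplete. Given only the numerical equalities \eqref{eq:strictpolystab}, the splitting of $F$ is a conclusion, not a hypothesis, and must be proved before one can ``just exhibit the decomposition.'' The paper does this by factoring $\bet = \bet'\circ s_\bet$ and $\gam = s_\gam\circ \gam'$ through the respective zero divisors; since $d_\bet + d_\gam = 4g-4$ forces $D_r = 0$, the product $s_\gam s_\bet$ already accounts for all zeros of $q = \gam\circ\bet$, so $\gam'\circ\bet'$ is a nowhere-vanishing map between line bundles of equal degree, hence an isomorphism, and $F = \tx{im}\,\bet' \oplus \ker\gam'$ splits as claimed.
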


\begin{proof}
Consider an SU(1,2) Higgs bundle $(F,\bet,\gam)$ with $d_\bet=2(g-1-\deg L)$, $d_\gam=2(g-1+\deg L)$. By above observation associated SL$(3,\mbc)$ Higgs bundle $(E,\Phi)$ is semistable. Let $s_\bet$ (resp. $s_\gam$) be global section of $\mco_X(D_\bet)$ (resp. $\mco_X(D_\gam)$) with zero divisors $D_\bet$ (resp. $D_\gam$). We have factorizations
\begin{align}
& \bet: LK_X^{-1}\xrightarrow{s_\bet}L(D_\bet)K_X^{-1}\xrightarrow{\bet'}F \\
& \gam: F\xrightarrow{\gam'}L(-D_\gam)K_X\xrightarrow{s_\gam}LK_X\,,
\end{align}
with $\gam'\circ\bet'$ an isomorphism of $L(D_\bet)K_X^{-1}\cong L(-D_\gam)K_X$. Let $L_1=\tx{im}\bet'\cong L(D_\bet)K_X^{-1}$ and $L_2=\tx{ker}\gam'\cong L^{-2}(-D_\bet)K_X\cong L^{-2}(D_\gam)K_X^{-1}$. It is straightforward to see that $F=L_1\oplus L_2$ and that 
\be
\bet=\pmt{s_\bet \\ 0},\,\, \gam=\pmt{s_\gam & 0}\,.
\ee
Therefore 
\be
(E,\Phi)=\rd{L\oplus L_1,\pmt{0 & s_\gam \\ s_\bet & 0}}\oplus \rd{L_2,0}
\ee
and it is easy to verify that both summands have same slope 0.

Conversely suppose the SL$(3,\mbc)$-Higgs bundle is strictly polystable. The only non-trivial way to split $E$ is into a rank two bundle and a line bundle, which must be the eigen-line-subbundle. The characteristic polynomial of $\Phi$ is given by $\lam\rd{\lam^2-q}=0$ and therefore the corresponding spectral curve $\Sigma=\Sigma_0\cup \Sigma_1\subset \tx{Tot}\rd{K_X}$ has two irreducible components with $\Sigma_0\cong X$ corresponding to the zero eigenvalue. Let $p: \tx{Tot}\rd{K_X}\to X$, then restriction to $\Sigma_1\to X$ gives a double branched covering, branching over simple zeros $D$ of quadratic differential $q$. The non-zero eigenvalues of $\Phi$ is not well-defined around simple zeros $p\in D$, therefore the eigen-line-subbundle must correspond to the zero eigenvalue. Therefore we have decomposition $F=L_1\oplus L_2$ and 
\be
\bet=\pmt{s_\bet \\ 0}: LK_X^{-1}\to L_1\oplus L_2,\,\, \gam=\pmt{s_\gam & 0}: L_1\oplus L_2\to LK_X\,,
\ee
where $s_\bet: LK_X^{-1}\to L_1$, $s_\gam: L_1\to LK_X$. As a consequence $s\gam\circ s_\bet=q$ has simple zeros at $D$. Since $\bet$ (resp. $\gam$) has zero at $p\in D$ iff $s_\bet$ (resp. $s_\gam$) does, we see that $d_\bet+d_\gam=4g-4$. In addition we have $L_1\cong L(D_\bet)K_X^{-1}$ and $L_2\cong L(-D_\gam)K_X$. Since $(E,\Phi)$ is semistable Eq.(\ref{eq:strictpolystab}) is satisfied.
\end{proof}

\begin{defn} \label{def:partitionstab}
Let $q\in H^0(X,K_X^2)$ with simple zeros $D=x_1+\ldots+x_{4g-4}$. Let $\ulD=\rd{D_\bet,D_\gam,D_r}$ be a paritition, i.e. $D=D_\bet+D_\gam+D_r$ and $D_\bet$, $D_\gam$, $D_r\ge 0$. Let $d_\bet=|D_\bet|$, $d_\gam=|D_\gam|$. We say $\ulD$ is stable (resp. strictly polystable) if Eq.(\ref{eq:stabdef1}) (resp. Eq.(\ref{eq:strictpolystab})) is staisfied, and we call $\ulD$ polystable if it is either stable or strictly polystable.

Let
\be
\mcy_{\ulD}=\left\{ \ull=\rd{\ell_1,\ldots,\ell_{4g-4}}\middle|
\begin{array}{l}
\ell_j\in [0:1]\tx{ if }x_j\in D_\gam\\
\ell_j\in [1:0]\tx{ if }x_j\in D_\bet\\
\ell_j\notin [0:1]\tx{ or }[1:0]\tx{ if }x_j\in D_r
\end{array}
\right\}\,.
\ee
\end{defn}

\subsection{Projective bundle and Serre's twisting sheaf}
\label{sec:proj}

In this part we will recall some facts about relative projective space, the associated invertible sheaf $\mco(1)$ and some related maps which will be used repeatedly in constructions and proofs below.

Suppose $\mcw$ is a rank two locally free sheaf over $Z$ a scheme over $\mbc$. Let $\mbp\rd{\mcw}=\underline{\tx{Proj}}\rd{\tx{Sym}\mcw^\ast}$ be its projectivization. (Note that some literature follow a convension in which this is denoted instead by $\mbp\rd{\mcw^\ast}$)

Let $\mco_{\mbp\rd{\mcw}}\rd{1}$ be the associated canonical invertible sheaf. There is a canonical surjective map of sheaves on $\mbp\rd{\mcw}$ (Prop 7.11 \cite{Har77} \S II.7) given by:
\be \label{eq:defpiW}
\pi_{\mcw}: p^\ast \mcw^\ast \to \mco_{\mbp\rd{\mcw}}\rd{1}\,.
\ee
For $z\in Z$ a closed point, the map $\mcw_z^\ast\otimes_{\mbc}\mco_{\mbp\rd{\mcw_z}}\to \mco_{\mbp\rd{\mcw_z}}\rd{1}$ at a line $\ell\in \mbp\rd{\mcw_z}$ may be viewed as restricting linear function in $\mcw_z^\ast$ to the line $\ell\subset \mcw_z$. 

In fact $\pi_\mcw$ is the relative version of the quotient map in the Euler sequence. Let $U\subset Z$ be an open subset over which there is isomorphism $\varphi_U: \mcw_U\to \mco_U^{\oplus 2}$ trivializing $\mcw$. Let $\phi_U: p^{-1}U\xrightarrow{\sim}U\times \mbp^1$ be the corresponding trivialization of the projective bundle and let $p': U\times\mbp^1\to \mbp^1$ be projection. Then there is an isomorphism $\atp{\mco_{\mbp\rd{\mcw}}(1)}{p^{-1}U}\cong \rd{p'}^\ast \mco_{\mbp^1}(1)$ under which Eq.(\ref{eq:defpiW}) restricted to $p^{-1}U$ is given by
\begin{align}
& \mco_{U\times\mbp^1}^{\oplus 2}\to \rd{p'}^\ast\mco_\mbp^1(1)\nonumber \\
& (s_0,s_1)\mapsto s_0 \rd{p'}^\ast x_0+s_1\rd{p'}^\ast x_1 \label{eq:piWloc}
\end{align}
where $x_0$ resp. $x_1\in H^0(\mbp^1,\mco(1))$ are the global sections with zeros at $[0:1]$ resp. $[1:0]$. 

We have $p_\ast \mco_{\mbp\rd{\mcw}}=\mco_Z$ and $p_\ast \mco_{\mbp\rd{\mcw}}\rd{1}\cong \mcw^\ast$ (Prop 7.11 \cite{Har77} \S II.7). In particular we have that the map $\pi_\mcw$ induces isomorphism
\be
H^0\rd{\mbp\rd{\mcw},p^\ast \mcw^\ast}=H^0\rd{Z,\mcw^\ast}\xrightarrow{\sim}H^0\rd{\mbp\rd{\mcw},\mco_{\mbp\rd{\mcw}}\rd{1}}\,.
\ee

The map $\pi_\mcw$ has the following universal property 

\begin{prop}[Prop 7.12 \S II.7 \cite{Har77}] \label{prop:projuniv}
Let $g: Y\to Z$ a morphism, $\mcl$ a line bundle on $Y$ and 
\be
\pi':g^\ast \mcw^\ast\to \mcl
\ee
a surjective sheaf map, then there is a morphism 
\be
f: Y\to\mbp\rd{\mcw}
\ee
where $pi'$ is obtained from $\pi_\mcw$ by applying $f^\ast$.
\end{prop}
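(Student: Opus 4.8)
The plan is to reduce this to the classical description of morphisms into $\mbp^1$ (Theorem 7.1 \cite{Har77} \S II.7) over affine opens of $Z$ on which $\mcw$ trivialises, and then to glue the resulting morphisms. The point is that everything in sight is intrinsic, so once the morphism is produced chart by chart its compatibility on overlaps is forced.

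First I would choose affine opens $U$ covering $Z$ together with trivialisations $\varphi_U:\mcw_U\xrightarrow{\sim}\mco_U^{\oplus 2}$, which — as recalled in the discussion preceding Eq.(\ref{eq:piWloc}) — induce isomorphisms $\phi_U:p^{-1}U\xrightarrow{\sim}U\times\mbp^1$ and $\atp{\mco_{\mbp\rd{\mcw}}\rd{1}}{p^{-1}U}\cong\rd{p'}^\ast\mco_{\mbp^1}(1)$, where $p':U\times\mbp^1\to\mbp^1$ is the projection, under which $\pi_\mcw$ takes the explicit form of Eq.(\ref{eq:piWloc}). Setting $V=g^{-1}U\subseteq Y$, the surjection $\atp{\pi'}{V}$ read through $\varphi_U$ becomes a surjective map $\mco_V^{\oplus 2}\to\atp{\mcl}{V}$, i.e. a pair of sections $s_0,s_1\in H^0\rd{V,\atp{\mcl}{V}}$ generating $\atp{\mcl}{V}$ (surjectivity being precisely the statement that they have no common zero). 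Since $U$ is affine, Theorem 7.1 \cite{Har77} \S II.7 then produces a unique $U$-morphism $f_V:V\to U\times\mbp^1$ together with an isomorphism $f_V^\ast\mco_{\mbp^1}(1)\cong\atp{\mcl}{V}$ carrying $f_V^\ast x_0,f_V^\ast x_1$ to $s_0,s_1$; comparing with Eq.(\ref{eq:piWloc}) this says exactly that $\atp{\pi'}{V}=f_V^\ast\rd{\atp{\pi_\mcw}{p^{-1}U}}$ under the identifications above. Composing with $\phi_U^{-1}$ gives a $Z$-morphism $V\to\mbp\rd{\mcw}$, still denoted $f_V$.

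It then remains to glue the $f_V$ into a single $f:Y\to\mbp\rd{\mcw}$ over $Z$. On an overlap $U\cap U'$ the two trivialisations differ by a cocycle in $\tx{GL}_2\rd{\mco_{U\cap U'}}$, and both $\mbp\rd{\mcw}$ and $\mco_{\mbp\rd{\mcw}}\rd{1}$ are patched along the induced $\tx{PGL}_2$-data; since $\pi_\mcw$ is an intrinsically defined map of sheaves on $\mbp\rd{\mcw}$ and $\pi'$ is intrinsically defined on $Y$, the two local presentations of $\pi'$ over $V\cap V'$ are intertwined by precisely this cocycle, so the uniqueness clause of Theorem 7.1 forces $\atp{f_V}{V\cap V'}=\atp{f_{V'}}{V\cap V'}$. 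Hence the $f_V$ patch to $f:Y\to\mbp\rd{\mcw}$ over $Z$, the identity $\pi'=f^\ast\pi_\mcw$ holds because it holds on the cover, and $f$ is unique because each $f_V$ is. I expect the only genuine obstacle to be the bookkeeping in this last step: keeping the $\tx{GL}_2\to\tx{PGL}_2$ transition data for $\mbp\rd{\mcw}$, for $\mco_{\mbp\rd{\mcw}}\rd{1}$ and for $\mcl$ mutually consistent, so that $f^\ast\pi_\mcw$ is visibly chart-independent. One may also bypass the gluing altogether: since $\tx{Sym}\,\mcw^\ast$ is generated in degree one, $Z$-morphisms $Y\to\underline{\tx{Proj}}\rd{\tx{Sym}\,\mcw^\ast}=\mbp\rd{\mcw}$ are in natural bijection with invertible-sheaf quotients of $g^\ast\mcw^\ast$ by the universal property of relative $\underline{\tx{Proj}}$, which is the assertion verbatim; the local computation above is simply the concrete unwinding of that bijection.
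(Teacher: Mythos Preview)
Your argument is correct: reducing to the classical description of maps to $\mbp^1$ via local trivialisations of $\mcw$ and then gluing by the uniqueness clause is a valid proof, and your closing remark that this is just the universal property of relative $\underline{\tx{Proj}}$ applied to $\tx{Sym}\,\mcw^\ast$ is the cleanest way to phrase it.

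That said, there is nothing to compare against here. The paper does not supply its own proof of this proposition: it is quoted verbatim as Prop 7.12 of \S II.7 in \cite{Har77} and used as a black box (specifically, to produce the maps $f_j:S\to\mcp_j$ from the surjections in Eq.(\ref{eq:generatingsections})). So your write-up is not an alternative to the paper's argument but rather a self-contained justification of a result the paper takes for granted. If you want to match Hartshorne's own proof, note that he argues directly via graded-algebra surjections $\tx{Sym}\,\mcw^\ast\to\bigoplus_{n\ge 0}\mcl^{\otimes n}$ and the functoriality of $\underline{\tx{Proj}}$, which is essentially the ``bypass'' you sketch in your last sentence; your chart-by-chart version is a more hands-on unwinding of the same map.
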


\subsection{Moduli functor}
\label{sec:modfunctor}

In this section we define the moduli functor in Theorem \ref{thm:main}. Fix $q\in H^0(X,K_X)$ with simple zeros $x_1,\ldots,x_{4g-4}$ and an integer $d$ with
\be
\abs{d}<g-1
\ee
We consider the following moduli problem of stable SU(1,2) Higgs bundles

\begin{defn} \label{def:modulifunctor}
Define functor
\begin{align}
& \mcm_{d,q}: \rd{\tx{Sch}/\mbc}\to \rd{\tx{Sets}},\nonumber \\
& S\mapsto \left.\left\{
\rd{\mce,\chi,\psi,M}\middle| \begin{array}{l}
\mce\tx{ rank two vector bundle over }X\times S,\,\, M\in \pic(S),\\
\deg\rd{\Lambda^2\mce_s^\ast}=d\tx{ for all }s\in S\tx{ closed point}\\
\chi: \lamtwo \mce^\ast\otimes\mck^{-1}\otimes \pr_S^\ast M\to \mce\\
\psi: \mce\to \lamtwo \mce^\ast\otimes \mck\otimes \pr_S^\ast M,\,\, \psi\circ\chi=\pr_X^\ast q\\
\rd{\mce_s,\chi_s,\psi_s}\tx{ stable SU(1,2) Higgs bundle}\\
\tx{for all }s\in S\tx{ closed point}
\end{array}
\right\}\right/\sim
\end{align}
where $\pr_X: X\times S\to X$ is projection, $\mck=\pr_X^\ast K_X$, let $\jmath_s: X\to X\times S$, $x\mapsto (x,s)$ and $\rd{\mce_s,\chi_s,\psi_s}=\jmath_s^\ast \rd{\mce,\chi,\psi}$. Furthermore, $\rd{\mce,\chi,\psi,M}\sim\rd{\mce',\chi',\psi',M'}$ if there is $M''\in \pic\rd{S}$, isomorphism $a:M\xrightarrow{\sim} M'\rd{M''}^3$ and there is isomorphism
\be
\alp: \mce\to \mce'\otimes\pr_S^\ast M''
\ee
which induces isomorphism $b$ given by
\be
\lamtwo \mce^\ast \otimes \pr_S^\ast M\xrightarrow{\rd{\lamtwo \alp}^{-t}\otimes 1} \lamtwo \rd{\mce'}^\ast \otimes\pr_S^\ast \rd{M''}^{-2}\otimes\pr_S^\ast M\xrightarrow{a} \lamtwo\rd{\mce'}^\ast\otimes \pr_S^\ast M'M''
\ee
such that the following diagram commutes
\begin{center}
\begin{tikzcd}
\lamtwo \mce^\ast \otimes \mck^{-1}\otimes \pr_S^\ast M \arrow[d,"b"] \arrow[r,"\chi"] & \mce \arrow[d,"\alp"] \arrow[r,"\psi"] & \lamtwo \mce^\ast \otimes \mck\otimes \pr_S^\ast M \arrow[d,"b"] \\
\lamtwo \rd{\mce'}^\ast \otimes\mck^{-1}\otimes\pr_S^\ast M' M'' \arrow[r,"\chi'\otimes 1"] & \mce'\otimes\pr_S^\ast M'' \arrow[r,"\psi'\otimes 1"] & \lamtwo \rd{\mce'}^\ast\otimes \mck\otimes \pr_S^\ast M' M''
\end{tikzcd}
\end{center}
For $S=\tx{Spec}\mbc$ we have $M=\mco_{\tx{Spec}\mbc}$ and the set $\mcm\rd{\tx{Spec}\mbc}$ consists of isomorphism classes of SU(1,2) Higgs bundles $(F,\bet,\gam)$ and in this case we will omit the last component in the tuple and write $(F,\bet,\gam)\sim (F',\bet',\gam')$ if there is an isomorphism $\alp: F\to F'$ on $X$ such that the following diagram commutes
\begin{center}
\begin{tikzcd}
\lamtwo F\otimes K_X^{-1} \arrow[d,"\rd{\lamtwo \alp}^{-t}\otimes 1"] \arrow[r,"\bet"] & F \arrow[r,"\gam"] \arrow[d,"\alp"] & \lamtwo F \otimes K_X \arrow[d,"\rd{\lamtwo \alp}^{-t}\otimes 1"] \\
\lamtwo F'\otimes K_X^{-1} \arrow[r,"\bet'"] & F' \arrow[r,"\gam'"] & \lamtwo F'\otimes K_X
\end{tikzcd}
\end{center}
\end{defn}

\section{A family of SU(1,2) Higgs bundles}
\label{sec:construction}

Let $\msl$, $\mck$, $\mcv$, $\mcp$, $\mcp_j$, $\mcy$ as in \S \ref{sec:intro} and we will denote by same letter a bundle and its total space. We will define a family of SU(1,2)-Higgs bundles parametrised by $\mcy$, i.e. an element of the set $\mcm_{d,q}\rd{\mcy}$. Let
\begin{align*}
& p_\mcp: \mcp\to \pic^d X,\,\, p_\mcy: \mcy\to \pic^d X, \\
& p^{(j)}:\mcp_j\to \pic^d X,\,\, p_j: \mcy\to \mcp_j, \\
& \pr_X: X\times \mcy\to X\,\,, \pr_\mcy: X\times \mcy\to \mcy
\end{align*}
be the projections, and let $\pr_j=p_j\circ\pr_{\mcy}: X\times\mcy\to \mcp_j$. Define
\be
\iota_j: \mcy\to X\times \mcy, \,\, y\mapsto \rd{x_j,y}\,.
\ee
Denote by $\mcy_j=\cl{x_j}\times \mcy$. Let 
\be
\tilde\mcv=\rd{1\times p_\mcy}^\ast \mcv\,.
\ee 
This is a rank two vector bundle over $X\times \mcy$. The rank two bundle in the tuple defining the $\mcy$-family will be given by a locally free subsheaf of $\tilde\mcv$ over $X\times\mcy$.

Define
\be
\mss_j=\rd{\iota_j}_\ast p_j^\ast \mco\rd{1}\,.
\ee
We have supp$\mss_j=\mcy_j$. Let $\mss=\bigoplus_{j=1}^{4g-4}\mss_j$ with supp$\mss=D\times \mcy$. Let
\be
\mco_{\mcy_j}=\rd{\iota_j}_\ast \mco_\mcy\cong \mco_{X\times\mcy}/\mco_{X\times\mcy}\rd{-\mcy_j}
\ee
with $\mcy_j$ the divisor $\cl{x_j}\times\mcy\subset X\times\mcy$. Note that we have for any sheaf $\mce$ on $X\times \mcy$,
\be
\rd{\iota_j}_\ast \iota_j^\ast \mce=\mce\otimes \mco_{\mcy_j}\,.
\ee
By replacing $Z$ (resp. $\mcw$) above by $\mcp_j$ (resp. $\mcv_{x_j}^\ast$) in Eq.(\ref{eq:defpiW}) we get surjective sheaf map
\be \label{eq:piv}
\rd{p^{(j)}}^\ast\mcv_{x_j}\xrightarrow{\pi_{\mcv_{x_j}^\ast}} \mco_{\mbp\rd{\mcv_{x_j}^\ast}}\rd{1}
\ee
over $\mcp_j$. Note that we have $p^{(j)}\circ p_j=p_\mcy$ and 
\begin{align}
& p_j^\ast \rd{p^{(j)}}^\ast \mcv_{x_j}=p_\mcy^\ast \mcv_{x_j}=\rd{\iota_{x_j}\circ p_\mcy}^\ast \mcv \nonumber \\
&=\rd{\rd{1\times p_\mcy}\circ\iota_j}^\ast \mcv= \iota_j^\ast \tilde\mcv
\end{align}
Therefore applying $\rd{\iota_j}_\ast p_j^\ast$ to Eq.(\ref{eq:piv}) above gives
\be
\pi_j: \rd{\iota_j}_\ast \iota_j^\ast\tilde\mcv\cong \tilde\mcv\otimes\mco_{\mcy_j}\to \mss_j
\ee
and let $\pi$ be given by the composition
\be
\tilde\mcv\to \tilde\mcv\otimes \mco_{\mcy_j}\xrightarrow{\sum_{j=1}^{4g-4}\pi_j}\mss
\ee

\begin{lem}
Let $\mcf=\tx{ker}\, \pi$. Then $\mcf$ is a locally free sheaf of rank two.
\end{lem}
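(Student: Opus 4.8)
The plan is to realise $\pi$ as a relative elementary (``Hecke'') modification of $\tilde\mcv$ along the pairwise disjoint divisors $\mcy_j=\cl{x_j}\times\mcy$, for which the kernel is always locally free of the same rank as $\tilde\mcv$.

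First I would check that $\pi$ is surjective, so that we genuinely have a short exact sequence. Each $\pi_{\mcv_{x_j}^\ast}$ in Eq.(\ref{eq:piv}) is surjective (\S\ref{sec:proj}); applying the exact pushforward $\rd{\iota_j}_\ast p_j^\ast$ along the closed immersion $\iota_j$ and precomposing with the canonical surjection $\tilde\mcv\to\tilde\mcv\otimes\mco_{\mcy_j}$ shows $\pi_j$ is surjective onto $\mss_j$. As $x_1,\ldots,x_{4g-4}$ are distinct, the supports $\mcy_j$ are pairwise disjoint, so $\pi=\sum_j\pi_j$ is surjective onto $\mss=\bigoplus_j\mss_j$, giving $0\to\mcf\to\tilde\mcv\xrightarrow{\pi}\mss\to 0$ with $\mcf$ coherent. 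Since $\mss$ is supported on $D\times\mcy=\bigcup_j\mcy_j$, over its complement $\mcf\cong\tilde\mcv$ is locally free of rank two; by disjointness of the $\mcy_j$ it then suffices to check local freeness of rank two near a point $(x_j,y_0)\in\mcy_j$, where locally $\pi=\pi_j$.

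For the local model I would put $\mcn_j=\ker\rd{\pi_{\mcv_{x_j}^\ast}}\subset\rd{p^{(j)}}^\ast\mcv_{x_j}$, a line subbundle. Choose a local coordinate $t$ on $X$ centred at $x_j$, viewed via $\pr_X$ as an equation for $\mcy_j$. Using $\iota_j^\ast\tilde\mcv\cong p_\mcy^\ast\mcv_{x_j}=p_j^\ast\rd{p^{(j)}}^\ast\mcv_{x_j}$ and that $p_j^\ast\mcn_j$ is a subbundle of $\tilde\mcv|_{\mcy_j}$, pick near $y_0$ a local frame of $\tilde\mcv|_{\mcy_j}$ whose first vector spans $p_j^\ast\mcn_j$; by Nakayama this lifts to a local frame $\tilde e_0,\tilde e_1$ of $\tilde\mcv$ near $(x_j,y_0)$ with $\tilde e_0|_{\mcy_j}$ spanning $p_j^\ast\mcn_j$. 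The map $\pi_j$ is restriction to $\mcy_j$ followed by the (pushed-forward) quotient by $\mcn_j$; after trivialising $p_j^\ast\mco_{\mbp\rd{\mcv_{x_j}^\ast}}\rd{1}$ and $\mss_j\cong\mco_{X\times\mcy}/(t)$ locally, it therefore kills $\tilde e_0$ and sends $\tilde e_1$ to a generator, so $a\tilde e_0+b\tilde e_1\mapsto\overline b$, the class of $b$ modulo $t$. Hence near $(x_j,y_0)$
\be
\mcf=\ker\pi_j=\mco_{X\times\mcy}\cdot\tilde e_0\oplus\mco_{X\times\mcy}\cdot(t\,\tilde e_1),
\ee
which is free of rank two; combined with the previous case, $\mcf$ is locally free of rank two.

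The only point that really needs justification is the local normal form of $\pi_j$: this rests on $\mcn_j$ being a subbundle (so that a frame adapted to it exists and lifts off $\mcy_j$) and on $\mcy_j$ being a Cartier divisor. The twist by $p_j^\ast\mco(1)$ is harmless since it is locally trivial, and the disjointness of the $\mcy_j$ lets us argue one $j$ at a time; the rest is bookkeeping, and I do not expect a serious obstacle.
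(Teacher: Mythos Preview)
Your argument is correct and follows essentially the same route as the paper: reduce to a local computation near each $\mcy_j$ and exhibit a free rank-two basis of the kernel by choosing a frame of $\tilde\mcv$ adapted to the line killed by $\pi_j$. The paper does this in explicit affine coordinates on the $\mbp^1$-factors, writing the generators as $\{(1,-x_0^{(j)}/x_1^{(j)}),(0,\zeta_j)\}$ (or the analogous pair in the other chart), which is exactly your $\{\tilde e_0,\,t\,\tilde e_1\}$ once one unwinds that $(1,-x_0^{(j)}/x_1^{(j)})$ spans the tautological kernel line $\mcn_j$; your use of Nakayama to lift the adapted frame off $\mcy_j$ is a clean replacement for the paper's coordinate bookkeeping.
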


\begin{proof}
Since supp$\mss=D\times \mcy$, it suffices to focus on closed point $(x_j,y)\in \mcy_j$. Let $p=p_\mcy(y)$ and let $\rd{U_j\times U;\rd{\uzeta,\uxi}}$ be a coordinate chart on $X\times\pic^d X$ with $p\in U$ and $x_\ell\in U_j$ only if $\ell=j$, such that $\msl$ is trivialized on $U_j\times U$ and $K_X$ is trivialized on $U_j$. Let $\phi:p_\mcy^{-1}U\to U\times\rd{\mbp^1}^{4g-4}$ be a corresponding trivialization and let
\be
Y=\left\{\ull=(\ell_1,\ldots,\ell_{4g-4})\middle|
\begin{array}{l}
n_1(\ull)<2(g-1+d) \\
n_2(\ull)<2(g-1-d)
\end{array}
\right\}\subset \rd{\mbp^1}^{4g-4}
\ee
where $n_1(\ull)=\abs{\cl{j|\ell_j=[0:1]}}$ and $n_2(\ull)=\abs{\cl{j|\ell_j=[1:0]}}$. It is easy to verify that $Y$ has an affine open cover given by
\be
Y=\bigcup_{\begin{array}{l}
\mci=\mci_1\coprod\mci_2\coprod\mci_3\\
\abs{\mci_1}=2(g-1+d)-1,\\
\abs{\mci_2}=2(g-1-d)-1
\end{array}
}\mca_{\mci_1,\mci_2,\mci_3}
\ee
where $\mci=\cl{1,\ldots,4g-4}$ and
\be
\mca_{\mci_1,\mci_2,\mci_3}=\left\{
\ull\in Y\middle| \begin{array}{l}
\ell_j\neq [1:0]\tx{ for }j\in\mci_1\\
\ell_j\neq [0:1]\tx{ for }j\in\mci_2\\
\ell_j\neq [1:0],[0:1]\tx{ for }j\in \mci_3
\end{array}
\right\}\cong \mbc^{4g-6}\times\rd{\mbc^\times}^2
\ee
Without loss of generality assume that $\phi(y)=(p,\ull)$ with $\ull\in \mca_{\mci_1,\mci_2,\mci_3}$ and $\mci_1=\cl{1,\ldots,n-1}$, $\mci_2=\cl{n,\ldots,4g-6}$ and $\mci_3=\cl{4g-5,4g-4}$ where $n=2(g-1+d)$. On $\phi^{-1}\rd{U\times\mca_{\mci_1,\mci_2,\mci_3}}$ we may use 
\be
\rd{\uxi;\frac{x_0^{(1)}}{x_1^{(1)}},\ldots,\frac{x_0^{(n-1)}}{x_1^{(n-1)}},\frac{x_1^{(n)}}{x_0^{(n)}},\ldots,\frac{x_1^{(4g-4)}}{x_0^{(4g-4)}}}
\ee
where $[x_0^{(j)}:x_1^{(j)}]$ is the homogeneous coordinate on jth $\mbp^1$. We trivialize $p_j^\ast \mco(1)$ over $p_\mcy^{-1}U$ by $p_j^\ast x_1^{(j)}$ (resp. $p_j^\ast x_0^{(j)}$) for $j<n$ (resp. $j\ge n$). Under the above trivializations the map $\pi$ on $U_j\times p_\mcy^{-1}U$ is given by
\begin{align}
&\mco_{U_j\times p_\mcy^{-1}U}^{\oplus 2}\to \mco_{p_\mcy^{-1}U}\nonumber \\
& \rd{s_0,s_1}\mapsto \begin{cases}
s_0\rd{\zeta_j=0}\frac{x_0^{(j)}}{x_1^{(j)}}+s_1\rd{\zeta_j=0} & j<n\\
s_0\rd{\zeta_j=0}+s_1\rd{\zeta_j=0}\frac{x_1^{(j)}}{x_0^{(j)}}
\end{cases}
\end{align}
It is clear that the kernel is freely generated by sections $\cl{(1,-x_0^{(j)}/x_1^{(j)}), (0,\zeta_j)}$ (resp. $\cl{(-x_1^{(j)}/x_0^{(j)},1),(\zeta_j,0)}$) for $j<n$ (resp. $j\ge n$).
\end{proof}

Let
\be
\eps_0: \mcf=\tx{ker}\pi\to \tilde\mcv
\ee
be an inclusion map. We have $\lamtwo \mcf\cong \rd{1\times p_\mcy}^\ast \msl^{-1}$ and we will fix an isomorphism and write
\be \label{eq:deteq1}
\lamtwo \mcf=\rd{1\times p_\mcy}^\ast \msl^{-1},.
\ee
To see this, first observe that by considering stalks at $\rd{x_j,y}$ for all closed points $y\in \mcy$ we have
\be
\mss_j=\rd{\pr_j^\ast \mco\rd{1}}\otimes \pr_X^\ast \mco_{\cl{x_j}}
\ee
where $\mco_{\cl{x_j}}$ is the skyscraper sheaf in the short exact sequence
\be
0\to \mco_X(-x_j)\to \mco_X\to \mco_{\cl{x_j}}\to 0
\ee
over $X$. Apply $\rd{\pr_j^\ast \mco\rd{1}}\otimes \pr_X^\ast \rd{-}$ to above sequence and noting that $\pr_X$ is flat morphism, we have the following resolution of $\mss_j$ by line bundles:
\be
0\to \pr_X^\ast \mco_X(-x_j)\otimes \pr_j^\ast \mco\rd{1}\to \pr_j^\ast \mco\rd{1}\to \mss_j\to 0
\ee
We have $\det\mss_j=\tx{pr}_X^\ast \mco_X(x_j)$ and furthermore,
\be
\det \mss=\tx{pr}_X^\ast \mco_X(D)=\tx{pr}_X^\ast K_X^2
\ee
where following e.g. \cite{Kob14} the determinant line bundle of coherent sheaf is defined by alternating sum of determinant line bundles of members of any locally free resolution of it, and it can be shown that the result is independent of choice of such resolution. Therefore since the short exact sequence
\be
0\to \mcf\xrightarrow{\eps_0}\tilde\mcv\xrightarrow{\pi} \mss\to 0
\ee
is another resolution, Eq.(\ref{eq:deteq1}) follows since $\det \tilde\mcv=\rd{1\times p_\mcy}^\ast \msl^{-1}\mck^2$.

It follows that for $y\in\mcy$ and $p=p_\mcy(y)\in\pic^d X$ we have that $\lamtwo \mcf_y=\lamtwo \jmath_y^\ast \mcf=\tilde\jmath_p^\ast \msl^{-1}$ with $\tilde\jmath_p: X\to X\times\pic^d X$ given by $x\mapsto (x,p)$. Therefore $\deg \lamtwo \mcf_y=d$ for all $y\in \mcy$ since $\msl$ is Poincar\'e line bundle of degree $d$.

We have that $\lamtwo \eps_0\in H^0\rd{X\times \mcy, \mck^2}$ have zero divisor at $D\times \mcy$ and it is easy to see that there is $c\in \mco^\times$, let $\eps=c\eps_0$ we have $\lamtwo \eps=\pr_X^\ast q$. 

Denote $\eps=\eps_1\oplus \eps_2$. We have
\begin{align}
& \eps_1: \mcf\to \rd{1\times p_\mcy}^\ast \msl^{-2}\mck \nonumber \\
& \eps_2: \mcf\to \rd{1\times p_\mcy}^\ast \msl\mck \nonumber
\end{align}
Let $\gam=\eps_2: \mcf\to \lamtwo \mcf^\ast \otimes\mck$. 

\begin{defn} \label{def:phimce}
For a locally free module $\mce$ of rank two, let 
\be \label{eq:defphie}
\phi_\mce: \mce^\ast\otimes \lamtwo \mce\to \mce, \,\, \ell\otimes s_1\wedge s_2\mapsto \ell(s_2)s_1-\ell(s_1)s_2\,.
\ee
\end{defn}

We have that $\phi_\mce$ defined above is a canonical isomorphism and induces identity between respective determinant line bundles. Let $\bet: \lamtwo \mcf^\ast\otimes \mck^{-1}\to \mcf$ be given by composition of the following maps:
\begin{align}
&-\eps_1^t\otimes 1: \rd{1\times p_\mcy}^\ast \msl^2\mck^{-1}\otimes \rd{1\times p_\mcy}^\ast \msl^{-1} \to \mcf^\ast\otimes \rd{1\times p_\mcy}^\ast \msl^{-1}=\mcf^\ast\otimes \lamtwo\mcf\nonumber \\
& \phi_\mcf: \mcf^\ast\otimes \lamtwo\mcf\nonumber\to\mcf
\end{align}
We will need the following simple lemma

\begin{lem} \label{lem:det}
Let $S$ be a scheme over $\mbc$ and $\eps=\eps_1\oplus \eps_2: \mce\to \msl_0^3\msl_1\oplus \msl_1$ be a map of locally free sheaves over $S$ with isomorphism
\be
a: \msl_0\xrightarrow{\sim}\lamtwo \mce
\ee
and let $f$ be given by 
\be
\msl_0^{-2}\msl_1^{-1}=\msl_0^{-3}\msl_1^{-1}\otimes\msl_0\xrightarrow{-\eps_1^t\otimes a}\mce^\ast \otimes \lamtwo \mce\xrightarrow{\phi_\mce}\mce
\ee
with $\phi_\mce$ as in Def \ref{def:phimce}. Then we have
\be
\lamtwo \eps\circ a=\eps_2\circ f
\ee
\end{lem}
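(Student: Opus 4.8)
The plan is to reduce the claimed identity to a local, rank-two linear-algebra computation, namely the ``(row vector)$\cdot$(adjugate) $= \det$'' identity. First I would note that $\lamtwo\eps\circ a$ and $\eps_2\circ f$ are both morphisms between invertible sheaves on $S$: using the canonical splitting $\lamtwo\rd{\msl_0^3\msl_1\oplus\msl_1}\cong\msl_0^3\msl_1\otimes\msl_1$ of $\lamtwo$ of a sum of two line bundles, the first is a map $\msl_0\to\msl_0^3\msl_1^2$, while the second is a map $\msl_0^{-2}\msl_1^{-1}\to\msl_1$; and under the canonical identifications $\mathcal{H}om\rd{\msl_0,\msl_0^3\msl_1^2}\cong\msl_0^2\msl_1^2\cong\mathcal{H}om\rd{\msl_0^{-2}\msl_1^{-1},\msl_1}$ both become global sections of the single line bundle $\msl_0^2\msl_1^2$. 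Hence the asserted equality is local on $S$, and it suffices to verify it over an open cover trivializing $\mce$, $\msl_0$ and $\msl_1$.

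On such an open set I would choose a frame $e_1,e_2$ of $\mce$ with dual frame $e_1^\ast,e_2^\ast$, write $\eps_1=a_1e_1^\ast+a_2e_2^\ast$ and $\eps_2=b_1e_1^\ast+b_2e_2^\ast$ (the values taken in the respective trivialized line bundles), and write the isomorphism $a$ as $a(\cdot)=u\,e_1\wedge e_2$ for a unit $u$. Evaluating $\phi_\mce$ on the frame gives $\phi_\mce\rd{e_1^\ast\otimes e_1\wedge e_2}=-e_2$ and $\phi_\mce\rd{e_2^\ast\otimes e_1\wedge e_2}=e_1$, so $f$ sends a generator of $\msl_0^{-2}\msl_1^{-1}$ to $u\rd{a_1e_2-a_2e_1}$, and applying $\eps_2$ yields $u\rd{a_1b_2-a_2b_1}$ times a generator of $\msl_1$. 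On the other hand, reading off the surviving component of $\lamtwo\eps\rd{e_1\wedge e_2}$ in $\msl_0^3\msl_1\otimes\msl_1$ gives $\rd{a_1b_2-a_2b_1}$ times a generator, and precomposing with $a$ contributes the extra factor $u$. So both sides equal $u\rd{a_1b_2-a_2b_1}$ as sections of $\msl_0^2\msl_1^2$, which is the lemma.

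A cleaner, coordinate-free way to organize the same computation — which I would include instead — is to use the defining identity of $\phi_\mce$: for $\ell,m\in\mce^\ast$ and $\omega\in\lamtwo\mce$ one has $m\rd{\phi_\mce\rd{\ell\otimes\omega}}=\rd{m\wedge\ell}\rd{\omega}$, where $m\wedge\ell$ denotes the corresponding element of $\rd{\lamtwo\mce}^\ast$. Applying this with $m=\eps_2$ and $\ell$ the $\mce^\ast$-valued functional underlying $-\eps_1^t$, the composite $\eps_2\circ\phi_\mce\circ\rd{-\eps_1^t\otimes a}$ becomes $\rd{\eps_2\wedge\rd{-\eps_1^t}}\circ a=\rd{\eps_1^t\wedge\eps_2}\circ a$ by antisymmetry of the wedge, and $\eps_1^t\wedge\eps_2$ is precisely $\lamtwo\eps$ under $\lamtwo\rd{\msl_0^3\msl_1\oplus\msl_1}\cong\msl_0^3\msl_1\otimes\msl_1$; this is exactly where the minus sign in the definition of $f$ is needed.

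I do not expect a genuine obstacle here: the mathematical content is the elementary $2\times2$ identity ``$\eps_2\cdot\mathrm{adj}(\eps_1)=\det\eps$'', and the only thing requiring care is bookkeeping — keeping the various dualizations and tensor identifications consistent (that $\msl_0^{-3}\msl_1^{-1}\otimes\msl_0=\msl_0^{-2}\msl_1^{-1}$, the three Hom-sheaf identifications with $\msl_0^2\msl_1^2$, and the canonical iso $\lamtwo\rd{A\oplus B}\cong A\otimes B$ for line bundles $A$, $B$), together with tracking the sign in $\phi_\mce$ so that the $-\eps_1^t$ in the definition of $f$ produces $+\lamtwo\eps$ rather than $-\lamtwo\eps$.
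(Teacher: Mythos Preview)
Your proposal is correct and follows essentially the same route as the paper: reduce to a local computation by trivializing $\mce$, $\msl_0$, $\msl_1$, write $\eps_1,\eps_2$ in coordinates, and verify both sides equal the $2\times 2$ determinant (the paper normalizes the frame so your unit $u$ is $1$, but the calculation is otherwise identical). Your additional coordinate-free formulation via the identity $m\bigl(\phi_\mce(\ell\otimes\omega)\bigr)=(m\wedge\ell)(\omega)$ is a pleasant extra not in the paper, and it makes transparent why the minus sign in $-\eps_1^t$ is exactly what is needed.
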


\begin{proof}
It suffice to verify the claim locally. Let $U\subset S$ open, trivializing $\mce$, $\msl_0$, and $\msl_1$. Let $\eta$ (resp. $\cl{\sig_1,\sig_2}$) be trivializing sections for $\msl_1$ (resp. $\mce$), then $\xi=a^{-1}\rd{\sig_1\wedge\sig_2}$ trivializes $\msl_0$ over $U$. Let $\eps_{ij}\in\mco_U$ with $i,j=1,2$ be given by
\begin{align}
& \eps_1:\sig_j\mapsto \eps_{1j}\xi^3\otimes\eta,\,\, j=1,2\,, \nonumber \\
& \eps_2:\sig_j\mapsto \eps_{2j}\eta,\,\,j=1,2\,.
\end{align}
Then we have $\lamtwo \eps\circ a: \xi\mapsto \rd{\eps_{11}\eps_{22}-\eps_{12}\eps_{21}}\xi^3\otimes\eta^2$. On the other hand we have
\be
f:\,\, \xi^{-3}\otimes\eta^{-1}\otimes \xi\xrightarrow{-\eps_1^t\otimes a} -\rd{\eps_{11}\sig_1^\ast+\eps_{12}\sig_2^\ast}\otimes\rd{\sig_1\wedge\sig_2}\xrightarrow{\phi_\mce} -\eps_{12}\sig_1+\eps_{11}\sig_2
\ee
We have under $\eps_2$:
\be
-\eps_{12}\sig_1+\eps_{11}\sig_2\mapsto \rd{\eps_{11}\eps_{22}-\eps_{12}\eps_{21}}\eta\,,
\ee
thus the conclusion follows.
\end{proof}

\begin{rmk} \label{rmk:locform}
Let $\mce$, $\eps$, $\eps_1$, $\eps_2$, $\eps_{ij}$ and $f$ as in the lemma above and trivializations as in the proof. Let $g=\eps_2$ and let $g_1$, $g_2\in \mco_U$ with $g\rd{\sig_j}=g_j \eta$ and $f_1$, $f_2\in \mco_U$ with $f\rd{\eta^{-1}}=f_1\sig_1+f_2\sig_2$, i.e. $f$ and $g$ has local forms
\be
f=\pmt{f_1 \\ f_2},\,\, g=\pmt{g_1 & g_2}
\ee
then it follows from calculation in the proof that we have $\eps$ with local form
\be
\eps=\pmt{\eps_{11} & \eps_{12} \\ \eps_{21} & \eps_{22}}=\pmt{f_2 & -f_1 \\ g_1 & g_2}\,.
\ee
\end{rmk}

By above lemma we see that $\gam\circ\bet=\lamtwo \eps=\pr_X^\ast q$.

Consider $y=(y_1,\ldots,y_{4g-4})\in\mcy$ a closed point, $L=p_\mcy(y)\in \pic^d X$ a degree $d$ line bundle on $X$ and $y_j\in \atp{\mcp_j}{L}$. We have

Let $\jmath_y:X\to X\times \mcy$ be given by $x\mapsto (x,y)$ and denote by $\mcf_y = \jmath_y^\ast \mcf \nonumber$ and by applying $\jmath_y^\ast$ we get holomorphic maps $\bet_y: \lamtwo\mcf_y^\ast \to \mcf_y\otimes K_X$ and $\gam_y: \mcf_y\to \lamtwo\mcf_y^\ast K_X$. Therefore the tripe $\rd{\mcf, \bet,\gam}$ indeed gives a family of SU(1,2) Higgs bundles parametrized by $\mcy$. Let $\eps_y$ (resp. $\eps_{1,y}$, $\eps_{2,y}$) denote $\eps$ (resp. $\eps_1$, $\eps_2$) after applying $\jmath_y^\ast$, we have
\be
\bet_y=-\phi_{\mcf_y}\circ \rd{\eps_{1,y}^t\otimes 1}\,.
\ee
where $\phi_{\mcf_y}$ is given in Def \ref{def:phimce}. Furthermore we have
\be
\jmath_y^\ast \tilde\mcv=\rd{\rd{1\times p_\mcy}\circ \jmath_y}^\ast\mcv \cong L^{-2}K\oplus LK
\ee
For any sheaf $\mcs$ on $\mcy$ by considering stalks over closed points we have
\be
\jmath_y^\ast \rd{\iota_j}_\ast \mcs\cong \rd{\mcs_y\underset{\mco_{\mcy,y}}{\otimes}k(y)}\underset{\mbc}{\otimes}\mco_{\cl{x_j}}
\ee
Therefore $\jmath_y^\ast \mss_j\cong \mco_{\cl{x_j}}$ and by applying $\jmath_y^\ast$ to $\tilde\mcv\xrightarrow{\pi}\mss$ we get
\be
L^{-2}K_X\oplus LK_X\xrightarrow{\sum_j \pi_{1,y_j}\oplus \pi_{1,y_j}}\mco_D=\bigoplus_{j=1}^{4g-4}\mco_{\cl{x_j}}
\ee
It now follows from definition of sequence Eq.(\ref{eq:defpiW}) that 
\begin{align}
& y_j=[0:1]\,\, \tx{ iff }\,\, \pi_{1,y_j}=0\,\, \tx{ iff }\,\, \eps_{2,y}\rd{x_j}=0 \tx{ iff }\,\, \gam_y\rd{x_j}=0, \label{eq:corresp1}\\
& y_j=[1:0]\,\, \tx{ iff }\,\, \pi_{2,y_j}=0\,\, \tx{ iff }\,\, \eps_{1,y}\rd{x_j}=0 \tx{ iff }\,\, \bet_y\rd{x_j}=0, \label{eq:corresp2}
\end{align}
where $\eps_{i,y}\rd{x_j}$ is the map on fiber over $x_j$. It follows from Prop \ref{prop:su12stab2} and definition of $\mcy\subset\mcp$ that $\rd{\mcf,\bet,\gam}$ is a family of stable SU(1,2) Higgs bundles.

Let $y=\in\mcy$, $L=p_\mcy(y)$ as above and let $V=L^{-2}K_X\oplus LK_X$. For future reference it will be convenient to have a more explicit description of $\jmath_y^\ast \pi$. We have $y_j\in \atp{\mcp_j}{L}\cong \mbp\rd{\atp{V}{x_j}^\ast}$. Let $\xi_j$ be a nonzero element on the line represented by $y_j$ in $\atp{V}{x_j}^\ast$. By choosing an identification of the stalk $\mco_{\cl{x_j}}$ at $x_j$ with $\mbc$, $\jmath_y^\ast \pi$ is given at $x_j\in X$ by evaluation
\be
\atp{V}{x_j}\xrightarrow{\tx{ev}_{\xi_j}}\mco_{\cl{x_j}}
\ee
Therefore up to isomorphism applying $\jmath_y^\ast$ to short exact sequence of sheaves on $X\times\mcy$
\be \label{eq:ses}
\begin{tikzcd}
0 \arrow[r] &  \mcf \arrow[r,"\eps"] & \tilde\mcv \arrow[r,"\pi"] & \mss \arrow[r] & 0
\end{tikzcd}
\ee
gives the following short exact sequence of sheaves on $X$:
\begin{center}
\begin{tikzcd}
0 \arrow[r] & \mcf_y \arrow[r,"\eps_y"] & V \arrow[r,"\sum_j\tx{ev}_{\xi_j}"] & \mco_D \arrow[r] & 0
\end{tikzcd}
\end{center}

\section{The Local universal property}
\label{sec:univprop}

Let $S$ be scheme over $\mbc$. Denote by $\pr_X'$, $\pr_S$ the standard projections from $X\times S$ and let $\jmath_s: X\to X\times S$ be the map $x\mapsto \rd{x,s}$ for $s\in S$ a closed point. Define $S_j=\cl{x_j}\times S$ and $\iota_j':S\to X\times S$ with $\iota_j':s\mapsto \rd{x_j,s}$. Consider a family of stable SU(1,2) Higgs bundles parametrized by $S$ given by triple $\rd{\mce, \chi, \psi,M}$ where $\deg \lamtwo \mce_s^\ast=d$ for all $s\in S$ closed point. By universal property of Poincar\'e line bundle, there is a unique morphism
\be
g:S\to \pic^d X
\ee
with 
\be
\lamtwo \mce=\rd{1\times g}^\ast \msl\otimes \pr_S^\ast M^{-1}
\ee
for some $M\in\pic(S)$. Fix some $s_0\in S$ a closed point, let $U$ be an open neighborhood of $s_0$ over which $M_U$ is trivialized. Let 
\be
\varphi_U: M_U\xrightarrow{\sim}\mco_U
\ee 
be a trivialization. Let $\mce'=\mce_{X\times U}$, $\chi'=\atp{\chi}{X\times U}\circ\rd{1\otimes\pr_S^\ast \varphi_U}$ and $\psi'=\rd{1\otimes \pr_S^\ast \varphi_U^{-1}}\circ\atp{\psi}{X\times U}$. The following diagram
\begin{center}
\begin{tikzcd}
\rd{\lamtwo\mce_{X\times U}^\ast\otimes\mck^{-1}}\otimes \pr_S^\ast M_U \arrow[d,"1\otimes \pr_S^\ast \varphi_U^{-1}"] \arrow[r,"\atp{\chi}{X\times U}"] & \mce_{X\times U} \arrow[d,"\tx{id}"] \arrow[r,"\atp{\psi}{X\times U}"] & \rd{\lamtwo \mce_{X\times U}^\ast \otimes \mck}\otimes\pr_S^\ast M_U \arrow[d,"1\otimes \pr_S^\ast\varphi_U^{-1}"] \\
\rd{\lamtwo \rd{\mce'}^\ast\otimes\mck^{-1}} \arrow[r,"\chi'"] & \mce' \arrow[r,"\psi'"] & \rd{\lamtwo \mce'\otimes\mck}
\end{tikzcd}
\end{center}
shows that we have $\rd{\mce_{X\times U},\atp{\chi}{X\times U},\atp{\psi}{X\times U},M_U}\sim \rd{\mce',\chi',\psi',\mco_U}$. We will show local universal property by constructing a morphism $f: U\to \mcy$ which realizes $\rd{\mce',\chi',\psi'}$ as pullback via $f$ of the family $\rd{\mcf,\bet,\gam}$ defined in last section.

For simplicity we will assume that the family we started with is of the form $\rd{\mce,\chi,\psi,\mco_S}$ and $\lamtwo \mce= g^\ast \msl^{-1}$ with $\psi\circ\chi=\rd{\pr_X'}^\ast q$ and
\begin{align*}
&\chi: \rd{1\times g}^\ast \msl\otimes \mck^{-1}\to \mce\\
&\psi: \mce\to \rd{1\times  g}^\ast \msl\otimes \mck
\end{align*} 
Define $\eps_1': \mce\to \rd{1\times g}^\ast \msl^{-2}\mck$ by composition of
\begin{align*}
& \phi_\mce^{-1}: \mce\to\mce^\ast\otimes\lamtwo \mce\cong \mce^\ast\otimes\rd{1\times  g}^\ast \msl^{-1}\\
& -\chi^t\otimes 1: \mce^\ast\otimes\rd{1\times  g}^\ast\msl^{-1}\to \rd{1\times  g}^\ast\msl^{-1}\otimes\mck\otimes \rd{1\times  g}^\ast\msl^{-1}\,,
\end{align*}
and let $\eps_2'=\psi$. Then $\eps'=\eps_1'\oplus \eps_2'$ maps $\mce\to \rd{1\times g}\mcv$. 

Let $\jmath_s: X\to X\times S$, $x\mapsto (x,s)$ and by definition of $S$-family $\rd{\mce_s,\chi_s,\psi_s}=\jmath_s^\ast\rd{\mce,\chi,\psi}$ is a stable SU(1,2)-Higgs bundle. Next we will need this simple result

\begin{lem}
Let $f:\mce\to\mcf$ be homomorphism of locally free sheaves of rank $r$ on a scheme $X$ where $\Lambda^r f:\Lambda^r \mce\to\Lambda^r \mcf$ is injective. Then $f$ is also injective
\end{lem}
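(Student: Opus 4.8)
The plan is to verify injectivity of $f$ stalk by stalk, where it reduces to a statement about a single matrix over a local ring. First I would observe that both the hypothesis and the conclusion can be checked on stalks: the kernel of a morphism of $\mco_X$-modules is formed stalkwise, so $f$ is injective iff $f_x\colon \mce_x\to\mcf_x$ is injective for every $x\in X$; and since $\Lambda^r$ commutes with localization we have $(\Lambda^r f)_x=\Lambda^r(f_x)$, so the hypothesis tells us that $\Lambda^r(f_x)$ is injective for every $x$. Thus I may assume $X=\mathrm{Spec}\,A$ with $A=\mco_{X,x}$ a local ring and $\mce$, $\mcf$ free $A$-modules of rank $r$.

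Next I would choose bases of $\mce_x$ and $\mcf_x$ and write $f_x$ as an $r\times r$ matrix $T$ with entries in $A$. With respect to the induced bases of $\Lambda^r\mce_x\cong A$ and $\Lambda^r\mcf_x\cong A$, the map $\Lambda^r(f_x)$ is multiplication by $\det T$, so the hypothesis says precisely that $\det T$ is a non-zero-divisor in $A$.

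The heart of the argument is then Cramer's rule in the form of the adjugate identity
\be
\mathrm{adj}(T)\,T=\det(T)\,I_r,
\ee
which holds over any commutative ring. Given $v\in A^r$ with $f_x(v)=Tv=0$, multiplying by $\mathrm{adj}(T)$ yields $\det(T)\,v=\mathrm{adj}(T)(Tv)=0$, and since $\det T$ is a non-zero-divisor this forces $v=0$. Hence $f_x$ is injective; as $x$ was arbitrary, $f$ is injective.

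I do not anticipate a genuine obstacle here; the only points needing a little care are the two reductions to stalks — that the kernel of a sheaf map and the formation of $\Lambda^r$ both commute with passing to stalks — together with the translation of ``$\Lambda^r f$ injective'' into the non-zero-divisor condition on $\det T$ that feeds Cramer's rule.
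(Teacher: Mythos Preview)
Your proposal is correct and is essentially the same argument as the paper's: reduce to stalks, represent $f_x$ by a matrix, and use the adjugate identity $\mathrm{adj}(T)\,T=\det(T)\,I_r$ together with the fact that $\det T$ is a non-zero-divisor to conclude injectivity. The paper's version is slightly terser (it phrases the last step as ``$\psi\circ\varphi=(\det\varphi)\,\mathrm{Id}$ is injective, hence $\varphi$ is''), but the content is identical.
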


\begin{proof}
It suffices to show injectivity of $f_x:\mce_x\to \mcf_x$ for all closed point $x\in X$. Consider trivializations $t:\mce_x\to \mco_{X,x}^{\oplus r}$ resp. $\tau:\mcf_x\to\mco_{X,x}^{\oplus r}$ and let $\varphi\in \tx{End}_{\mco_{X,x}}\rd{\mco_{X,x}}^{\oplus r}$ be the $r\times r$ matrix with entries in $\mco_{X,x}$ such that $\tau\circ\phi_x=\varphi\circ t$. Let $\psi$ be the adjugate matrix, then we have that composition
\be
\mco_{X,x}^{\oplus r}\xrightarrow{\varphi}\mco_{X,x}^{\oplus r}\xrightarrow{\psi}\mco_{X,x}^{\oplus r}
\ee
given by $\rd{\det \varphi}\tx{Id}$ is injective by assumption, therefore $\varphi$ is injective.
\end{proof}

By Lemma \ref{lem:det} we have $\lamtwo \eps'=\psi\circ\chi=\rd{\pr_X'}^\ast q$. By above lemma $\eps': \mce\to \rd{1\times g}^\ast\mcv$ is a locally free subsheaf of rank two. Let 
\be
\mcq=\left.\rd{1\times g}^\ast \mcv\right/\eps'\rd{\mce}
\ee
and let
\be \label{eq:defpiprime}
\pi': \rd{1\times g}\mcv\to \mcq
\ee
be the quotient map. Since $\lamtwo \eps'$ is isomorphic on stalk at closed points $\rd{x,s}$ with $x\notin D$, we have that supp$\mcq\subseteq D\times S=\coprod_j S_j$. Therefore we have
\be
\mcq=\bigoplus_{j=1}^{4g-4}\mcq_j
\ee
where supp$\mcq_j\subseteq S_j$. Let $\mce_j$ be the kernel of composition $\rd{1\times g}^\ast \mcv\to\mcq\to\mcq_j$. By considering stalk at closed point we see that $\mce_j$ is a locally free subsheaf of $\tx{pr}_X^\ast V$ containing $\mce$. In particular $\mcq_j=\left.\rd{1\times g}^\ast \mcv\right/\mce_j$ is quotient of coherent sheaves, hence it is a coherent sheaf on $X\times S$. Let $\kappa_j: \mce_j\to\rd{1\times g}^\ast \mcv$ be an inclusion map such that $\eps'$ factors through it. Consider the maps between line bundles 
\be
\lamtwo \mce\to\lamtwo \mce_j\xrightarrow{\lamtwo \kappa_j}\lamtwo \rd{1\times g}^\ast \mcv
\ee
It is clear that $S_j$ is divisor of $\lamtwo \kappa_j$, hence $\mcq_j$ is annihilated by the ideal sheaf of $S_j\subset X\times S$, and is naturally a module over $\mco_{S_j}=\rd{\iota_j'}_\ast \mco_S$. Therefore there is a coherent sheaf $\msl_j$ on $S$ such that 
\be
\mcq_j\cong \rd{\iota_j'}_\ast \msl_j 
\ee
Consider the map $\kappa_j$ at stalk over the closed point $\rd{x_j,s}$ for some closed point $s\in S$. The local ring $\mco_{X,x_j}$ is a DVR. Let $\zeta$ be a uniformizer. We may take trivializations of the stalks of $\rd{\tx{pr}_X'}^\ast V$ and that of $\mce_j$ under which $\kappa_j$ is represented by a $2\times 2$ matrix with entries in $\mco_{X\times S,\rd{x_j,s}}=\mco_{X,x_j}\underset{\mbc}{\otimes}\mco_{S,s}$ with determinant is $\zeta\otimes 1$. Denote by $R_0=\mco_{X,x_j}$, $R_1=\mco_{S,s}$ and $R=R_0\underset{\mbc}{\otimes}R_1$.

\begin{lem}
Let $\varphi\in \tx{End}_R\rd{R^{\oplus 2}}$ with $\det\varphi=\zeta\otimes 1$. There exists $P$, $Q\in \tx{Aut}_R\rd{R^{\oplus 2}}$ such that
\be \label{eq:smithform}
P\cdot\varphi\cdot Q=\pmt{1 & \\ & \zeta\otimes 1}
\ee
\end{lem}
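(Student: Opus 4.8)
The statement is a version of Smith normal form adapted to the ring $R=R_0\ot_{\mbc}R_1$, which is not a principal ideal domain; the plan is to reduce modulo $\zeta$, where the ring becomes local, perform the diagonalization there, and then bootstrap back to $R$. First observe that $R$ (more precisely the local ring $\mco_{X\times S,(x_j,s)}$) is flat over the discrete valuation ring $R_0=\mco_{X,x_j}$, since $X\times S\to X$ is flat; hence $R$ is torsion-free over $R_0$, multiplication by $\zeta$ (write $\zeta$ for $\zeta\ot 1$) is injective on $R$, and $R/\zeta R\cong R_1$ is a local ring with residue field $\mbc$. Reducing $\varphi$ modulo $\zeta$ produces $\bar\varphi\in\Endo_{R_1}\rd{R_1^{\oplus 2}}$ with $\det\bar\varphi=\overline{\zeta}=0$. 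The steps are: (i) show $\bar\varphi$ has rank exactly $1$ over $\mbc$; (ii) diagonalize $\bar\varphi$ to $\pmt{1 & 0\\0 & 0}$ by invertible row/column operations over $R_1$; (iii) lift those operations to $\tx{Aut}_R\rd{R^{\oplus 2}}$ and finish the reduction over $R$ using that $1+\zeta(\cdot)$ is a unit.

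The crux is step (i), and it is the only genuine obstacle. Since $\det\bar\varphi=0$ the rank is at most $1$. If the rank were $0$, every entry of $\bar\varphi$, hence of $\varphi$, would lie in $\mfm_R$, so $\det\varphi=\zeta$ would lie in $\mfm_R^2$. But this is impossible: $\zeta$ is a uniformizer of the regular local ring $R_0=\mco_{X,x_j}$, so $\zeta\notin\mfm_{R_0}^2$, and since $R$ is flat over $R_0$ the element $\zeta$ remains part of a minimal generating set of $\mfm_R$ — equivalently, after completion $\widehat R\cong\widehat{\mco}_{S,s}[[\zeta]]$, in which $\zeta\notin\mfm^2$ is manifest. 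This is precisely where the smoothness of $X$ at $x_j$ (equivalently, that the zeros of $q$ are simple, so $\zeta$ has valuation one) enters the argument; everything after it is bookkeeping with elementary matrices.

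Granting step (i), some entry of $\bar\varphi$ is a unit of $R_1$. Permuting rows and columns to move it to the $(1,1)$-slot, rescaling it to $1$, and then using it to clear the remainder of the first row and the first column by elementary row/column operations over $R_1$, we obtain $\bar P\,\bar\varphi\,\bar Q=\pmt{1 & 0\\0 & \delta}$ with $\bar P,\bar Q\in\tx{Aut}_{R_1}\rd{R_1^{\oplus 2}}$; comparing determinants (the right-hand side has determinant $\delta$, the left-hand side has determinant a unit times $\det\bar\varphi=0$) forces $\delta=0$. Lift $\bar P,\bar Q$ to $P_0,Q_0\in\tx{Aut}_R\rd{R^{\oplus 2}}$ — possible because $R\to R_1$ is surjective and a matrix over $R$ whose reduction is invertible has unit determinant, hence is invertible — and set $\varphi_1=P_0\varphi Q_0$. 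Then $\bar\varphi_1=\pmt{1 & 0\\0 & 0}$, so
\be
\varphi_1=\pmt{1+\zeta a & \zeta b\\ \zeta c & \zeta d}
\ee
for some $a,b,c,d\in R$, and $1+\zeta a\in R^\times$ since it reduces to $1$ modulo $\mfm_R$.

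Finally, using the unit $1+\zeta a$ as pivot, clear the two off-diagonal entries of $\varphi_1$ by one elementary row operation and one elementary column operation over $R$, arriving at $\pmt{1+\zeta a & 0\\ 0 & \zeta e}$ for some $e\in R$. All matrices applied so far ($P_0$, $Q_0$, and the elementary ones) have unit determinant, while $\det\varphi=\zeta$; hence the determinant $\rd{1+\zeta a}\,\zeta e$ of this matrix is $\zeta$ times a unit, and since $\zeta$ is a non-zero-divisor, $e\in R^\times$. Rescaling the two rows by $\rd{1+\zeta a}^{-1}$ and $e^{-1}$ yields $\pmt{1 & 0\\0 & \zeta}$. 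Collecting the diagonal and elementary matrices together with $P_0$ on the left and $Q_0$ (together with the column operation) on the right gives $P,Q\in\tx{Aut}_R\rd{R^{\oplus 2}}$ with $P\varphi Q=\pmt{1 & 0\\0 & \zeta\ot 1}$, which is Eq.(\ref{eq:smithform}).
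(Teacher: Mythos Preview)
Your proof is correct and follows the same overall strategy as the paper's: reduce modulo $\zeta$, show some entry of the reduction $\bar\varphi$ is a unit in the local ring $R_1$, and then use that unit as a pivot to reach the Smith form. The execution differs in two minor respects. For step~(i) the paper avoids the cotangent-space argument $\zeta\notin\mfm_R^2$ by direct expansion: writing $\varphi_{ij}=1\otimes a_{ij}+(\zeta\otimes 1)b_{ij}$ and equating $\det\varphi=\zeta\otimes 1$, the constant term gives $a_{11}a_{22}-a_{12}a_{21}=0$ and the $\zeta$-coefficient yields an identity $\sum\pm(1\otimes a_{ij})b_{k\ell}+(\zeta\otimes 1)(\cdots)=1$ in $R$, which forces some $a_{ij}$ to be a unit since a sum equal to $1$ in a local ring must contain a unit term. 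For the reduction itself, the paper writes down $P$ and $Q$ in one stroke with entries built from the $a_{ij}$, whereas you run the cleaner lift-and-clear procedure (diagonalize over $R_1$, lift, then clear the $\zeta$-multiples). Both routes are equally valid; yours is slightly more conceptual, the paper's slightly more explicit.
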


\begin{proof}
Let
\be
\varphi=\pmt{\varphi_{11} & \varphi_{12} \\ \varphi_{21} & \varphi_{22}}
\ee
with 
\be \label{eq:deteq2}
\varphi_{11}\varphi_{22}-\varphi_{12}\varphi_{21}=\zeta\otimes 1\,.
\ee
Let $a_{ij}\in R_1$ be the image of $\varphi_{ij}$ under quotient map 
\be
R_0\underset{\mbc}{\otimes}R_1\to k(x_j)\underset{\mbc}{\otimes}R_1\cong R_1
\ee
Note that the kernel of this map is the principal ideal $\rd{\zeta\otimes 1}$, and let $b_{ij}\in R$ such that
\be
\varphi_{ij}=1\otimes a_{ij}+\rd{\zeta\otimes 1}\cdot b_{ij}
\ee
It follows from Eq.(\ref{eq:deteq2}) that $a_{11}a_{22}-a_{12}a_{21}=0$ and 
\be
a_{11}b_{22}+a_{22}b_{11}-a_{12}b_{21}-a_{21}b_{12}+\rd{\zeta\otimes 1}b_{11}b_{22}-\rd{\zeta\otimes 1}b_{12}b_{21}=1
\ee
since $\zeta\otimes 1$ is not a zero divisor in $R$. Therefore at least one term in the finite sum must be a unit in local ring $R$. In particular since the last two terms are clearly non-units, at least one of the four elements $a_{ij}$ must be a unit.

Without loss of generality assume $a_{11}$ is unit, then let
\be
P=\pmt{1 & 0 \\ 1\otimes a_{21} & -1\otimes a_{11}}
\ee
We have $c_1$, $c_2\in R$ such that 
\begin{align}
& \rd{1\otimes a_{21}}\varphi_{11}-\rd{1\otimes a_{11}}\varphi_{21}=\rd{\zeta\otimes 1}c_1\nonumber \\
& \rd{1\otimes a_{21}}\varphi_{12}-\rd{1\otimes a_{11}}\varphi_{22}=\rd{\zeta\otimes 1}c_2
\end{align}
Let
\be
Q=-\rd{1\otimes a_{11}^{-1}}\pmt{c_2 & -\varphi_{12}\\-c_1 & \varphi_{11}}
\ee
It is easy to verify that Eq.(\ref{eq:smithform}) holds.

For the case where $a_{12}$ (resp. $a_{21}$, $a_{22}$) is a unit, we may take instead
\be
P=\pmt{1 & 0 \\ 1\otimes a_{22} & -1\otimes a_{12}}\,\, \rd{\tx{resp. }\,\, \pmt{1\otimes a_{21} & -1\otimes a_{11} \\ 0 & 1},\,\, \pmt{1\otimes a_{22} & -1\otimes a_{12} \\ 0 & 1} }
\ee
and $P\varphi$ will be of the form $\tx{diag}\rd{\zeta\otimes 1, 1}\cdot Q^{-1}$ for some invertible matrix $Q$.
\end{proof}

By above lemma, since
\be
\left.R\oplus R\right/\rd{\rd{\zeta\otimes 1}R\oplus R}\cong R/\rd{\zeta\otimes 1}\cong R_1\,,
\ee
we have that stalk $\rd{\msl_j}_s\cong \mco_{S,s}$, therefore $\msl_j$ is a line bundle on $S$ (i.e. locally free of rank one).

From above we have that $\mce_j$ is annihilated by ideal sheaf of $S_j$, therefore quotient map $\rd{1\times g}^\ast \mcv\to \mcq_j$ factor through a map
\be
\Pi_j: \rd{1\times g}^\ast \mcv\underset{\mco_{X\times S}}{\otimes}\mco_{S_j}\to\mcq_j=\rd{\iota_j}_\ast \msl_j
\ee
As in \S \ref{sec:construction} we have 
\be
\rd{1\times g}^\ast V\underset{\mco_{X\times S}}{\otimes}\mco_{S_j}\cong \rd{\iota_j}_\ast  g^\ast \mcv_{x_j}\,.
\ee
It follows that $\Pi_j$ is given by a surjective sheaf map on $S$
\be \label{eq:generatingsections}
 g^\ast \mcv_{x_j}\to \msl_j
\ee
By Prop \ref{prop:projuniv} this determines for each $1\le j\le 4g-4$ a morphism $f_j: S\to \mcp_j$ covering $g: S\to\pic^d X$ such that the above sheaf map is obtained via application of $f_j^\ast$ to $p_\mcy^\ast \mcv_{x_j}\to \mco(1)$.

Let
\be
f=f_1\underset{\pic^d X}{\times}\ldots\underset{\pic^d X}{\times}f_{4g-4}: S\to \mcp=\mcp_1\underset{\pic^d X}{\times}\ldots\underset{\pic^d X}{\times} \mcp_{4g-4}\,.
\ee
Similar to Eqs.(\ref{eq:corresp1}), (\ref{eq:corresp2}), we have for $s\in S$ closed point with $\ell=g(s)\in\pic^d X$,
\begin{align}
& \chi_s\rd{x_j}=0\,\, \tx{ iff }\,\, f_j(s)=[1:0]\in \mbp\rd{\atp{\mcv}{(x_j,\ell)}^\ast} \\
& \psi_s\rd{x_j}=0\,\, \tx{ iff }\,\, f_j(s)=[0:1]\in \mbp\rd{\atp{\mcv}{(x_j,\ell)}^\ast}
\end{align}
Therefore we see that $f$ factors through $\mcy\subset\mcp$, we denote again by $f:S\to\mcy$. It follows from construction of $\pi: \tilde\mcv\to\mss$, $\pi':\rd{1\times g}^\ast \mcv\to \mcq$ and $f$ that $\pi'=f^\ast \pi$. Note we have $\rd{1\times f}^\ast\tilde\mcv=\rd{1\times g}^\ast \mcv$, applying $f^\ast$ to the short exact sequence
\be
0\to\mcf\xrightarrow{\eps=\eps_1\oplus \eps_2}\tilde\mcv\xrightarrow{\pi}\mss\to 0
\ee
gives
\be
\rd{1\times f}^\ast \mcf\xrightarrow{\rd{1\times f}^\ast\eps} \rd{1\times g}^\ast \mcv \xrightarrow{\pi'}\mcq\to 0\,.
\ee
If follows that $\rd{1\times f}^\ast \eps$ factors through $\eps'$ and we have commutative diagram
\begin{center}
\begin{tikzcd}
\rd{1\times f}^\ast \mcf \arrow[r,"\rd{1\times f}^\ast \eps"] \arrow[d,"\alp"] & \rd{1\times g}^\ast\mcv \arrow[d,equal] \\
\mce \arrow[r,"\eps'"] & \rd{1\times g}^\ast\mcv
\end{tikzcd}
\end{center}
We have that correspondingly $\lamtwo \rd{1\times f}^\ast \eps$ factors as
\be
\lamtwo \rd{1\times f}^\ast\mcf\xrightarrow{\lamtwo \alp}\lamtwo \mce\xrightarrow{\lamtwo \eps'}\lamtwo \rd{1\times g}^\ast\mcv\,.
\ee
Note that we also have 
\be
\lamtwo \rd{1\times f}^\ast \eps=\rd{1\times f}^\ast \lamtwo \eps=\rd{\tx{pr}_X'}^\ast q=\lamtwo \eps'
\ee
Therefore $\lamtwo \alp=1$ and $\alp$ is isomorphism between $\rd{1\times f}^\ast \mcf$ and $\mce$. Recall we have $\psi=\eps_2'$ and $\gam=\eps_2$, therefore 
\be
\psi=\rd{\rd{1\times f}^\ast \gam}\circ\alp^{-1}
\ee
The following lemma can be verified easily from Eq.(\ref{eq:defphie})

\begin{lem} \label{lem:phie}
Given $\mcv$, $\mcw$ vector bundles of rank two with isomorphism $\mu: \mcv\to \mcw$ we have commutative diagram
\begin{center}
\begin{tikzcd}
\mcv^\ast \otimes\lamtwo \mcv \arrow[r,"\phi_\mcv"] & \mcv \arrow[d,"\mu"] \\
\mcw^\ast\otimes \lamtwo \mcw \arrow[r,"\phi_\mcw"] \arrow[u,"\mu^t \otimes \rd{\lamtwo \mu}^{-1}"] & \mcw
\end{tikzcd}
\end{center}
where $\phi_\mcv$, $\phi_\mcw$ is defined in Def \ref{def:phimce}.
\end{lem}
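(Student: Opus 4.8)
The plan is to verify commutativity of the square directly and locally. Every arrow in the diagram is an $\mco$-linear map of locally free sheaves, and $\phi_\mcv$, $\phi_\mcw$ are given by the pointwise formula of Definition \ref{def:phimce}, so it suffices to check the identity
\[
\mu\circ\phi_\mcv\circ\rd{\mu^t\otimes\rd{\lamtwo\mu}^{-1}}=\phi_\mcw
\]
over an arbitrary open set $U$ over which $\mcv$ (hence, through the isomorphism $\mu$, also $\mcw$) is trivial.

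First I would fix a frame $\cl{e_1,e_2}$ of $\mcv|_U$ and put $f_i=\mu(e_i)$, a frame of $\mcw|_U$. Then $\mu^t\colon\mcw^\ast\to\mcv^\ast$ sends the dual frame $\cl{f_1^\ast,f_2^\ast}$ to $\cl{e_1^\ast,e_2^\ast}$, while $\lamtwo\mu$ sends $e_1\wedge e_2$ to $f_1\wedge f_2$, so $\rd{\lamtwo\mu}^{-1}$ sends $f_1\wedge f_2$ back to $e_1\wedge e_2$. The chase of a general section $\rd{a f_1^\ast+b f_2^\ast}\otimes\rd{f_1\wedge f_2}$ with $a,b\in\mco_U$ then runs: applying $\mu^t\otimes\rd{\lamtwo\mu}^{-1}$ gives $\rd{a e_1^\ast+b e_2^\ast}\otimes\rd{e_1\wedge e_2}$; applying $\phi_\mcv$ gives $b e_1-a e_2$ by Eq.(\ref{eq:defphie}); and applying $\mu$ gives $b f_1-a f_2$. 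On the other hand $\phi_\mcw$ applied directly to $\rd{a f_1^\ast+b f_2^\ast}\otimes\rd{f_1\wedge f_2}$ gives $\rd{a f_1^\ast+b f_2^\ast}(f_2)f_1-\rd{a f_1^\ast+b f_2^\ast}(f_1)f_2=b f_1-a f_2$. The two outputs agree for all $a,b\in\mco_U$, so the square commutes over $U$, and since such $U$ cover the base it commutes globally.

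Since this amounts to a single line of bilinear bookkeeping, there is no genuine obstacle; the only point requiring care is the placement of the transpose and of the induced map on determinants in the left vertical arrow, which is exactly what makes the formula of Definition \ref{def:phimce} frame-independent. If one prefers a coordinate-free argument, one can instead observe that for a rank-two sheaf $\mce$ the map $\phi_\mce$ is the inverse of the canonical isomorphism $\mce\to\mce^\ast\otimes\lamtwo\mce$, $s\mapsto\rd{t\mapsto s\wedge t}$; this latter map is visibly natural in $\mce$, its naturality square for an isomorphism $\mu\colon\mcv\to\mcw$ having right-hand vertical $\rd{\mu^t}^{-1}\otimes\lamtwo\mu$, and inverting that square reproduces precisely the diagram of the lemma.
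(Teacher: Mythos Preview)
Your proof is correct and follows exactly the approach the paper has in mind: the paper simply states that the lemma ``can be verified easily from Eq.(\ref{eq:defphie})'' and gives no further detail, so your local frame computation (and the optional naturality argument via the inverse map $s\mapsto(t\mapsto s\wedge t)$) is precisely the kind of verification being left to the reader.
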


Let $\tilde f=1\times f$, $\tilde g=1\times g$, $\mck=\rd{\pr_X'}^\ast K_X$. Note we have $\rd{\tilde f}^\ast \lamtwo \mcf=\rd{\tilde g}^\ast\msl^{-1}$. By above lemma and by noting $\rd{\tilde f}^\ast \phi_\mcf=\phi_{\rd{\tilde f}^\ast \mcf}$, it is easy to verify that the following diagram commutes
\begin{center}
\begin{tikzcd}
\rd{\tilde g}^\ast\msl^2\otimes \mck^{-1}\otimes \lamtwo \rd{\tilde f}^\ast \mcf \arrow[r,"-\rd{1\times f}^\ast \eps_1^t \otimes 1"] \arrow[d,equal] & \rd{\tilde f}^\ast\mcf^\ast\otimes  \lamtwo \rd{\tilde f}^\ast \mcf \arrow[r,"\rd{\tilde f}^\ast\phi_\mcf"] & \rd{\tilde f}^\ast \mcf \\
\rd{\tilde g}^\ast \msl^2\otimes\mck^{-1}\otimes\lamtwo \mce \arrow[r,"-\rd{\eps_1'}^t\otimes 1"] & \mce^\ast\otimes \lamtwo \mce \arrow[u,"\alp^t \otimes \rd{\lamtwo \alp}^{-1}"] \arrow[r,"\phi_\mce"] & \mce \arrow[u,"\alp^{-1}"]
\end{tikzcd}
\end{center}
Therefore we have that
\be
\alp\circ\rd{\tilde f}^\ast\bet =\chi\,.
\ee
As a result we have 
\be
\rd{\mce,\chi,\psi,\mco_S}\sim \rd{1\times f}^\ast \rd{\mcf,\bet,\gam,\mco_\mcy}\,.
\ee

In particular summarizing above we have showed the following:

\begin{thm} \label{thm:locuniv}
For $\abs{d}<g-1$ and $q\in H^0(X,K_X^2)$ with simple zeros, let $\mcy$ and $\mcf$, $\bet$, $\gam$ as above. Then the family $\rd{\mcf,\bet,\gam,\mco_\mcy}$ of stable SU(1,2) Higgs bundles parametrized by $\mcy$ have local universal property for the moduli functor $\mcm_{d,q}$, i.e. for any family $\rd{\mce,\chi,\psi,M}$ of stable SU(1,2) Higgs bundles paramtrized by a scheme $S$ over $\mbc$ and any closed point $s\in S$, there is a neighborhood $U$ of $s$ such that $\rd{\mce_{X\times U},\atp{\chi}{X\times U},\atp{\psi}{X\times U},M_U}$ is equivalent to the family induced from $\mcy$ via some morphism
\be
f:U\to \mcy\,.
\ee
\end{thm}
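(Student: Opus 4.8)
The strategy is to turn the statement into a construction by unwinding the equivalence relation in Definition \ref{def:modulifunctor}. First, since $M\in\pic(S)$, I would choose a neighborhood $U\ni s$ on which $M$ trivializes and, as in the paragraph preceding the theorem, replace the given family by an equivalent one of the shape $\rd{\mce,\chi,\psi,\mco_U}$; so it suffices to treat $M=\mco_S$. The universal property of the Poincar\'e bundle then gives a unique $g\colon S\to\pic^d X$ with $\lamtwo\mce\cong\rd{1\times g}^\ast\msl^{-1}$. Dualizing the Higgs data via the canonical isomorphism $\phi_\mce$ of Definition \ref{def:phimce}, I set $\eps_1'=-\rd{\chi^t\otimes 1}\circ\phi_\mce^{-1}$ and $\eps_2'=\psi$, so that $\eps'=\eps_1'\oplus\eps_2'\colon\mce\to\rd{1\times g}^\ast\mcv$; Lemma \ref{lem:det} yields $\lamtwo\eps'=\psi\circ\chi=\rd{\pr_X'}^\ast q$, which is nonzero on each fiber, whence $\eps'$ realizes $\mce$ as a rank-two locally free subsheaf of $\rd{1\times g}^\ast\mcv$.

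The heart of the argument is the analysis of the quotient $\mcq=\rd{1\times g}^\ast\mcv/\eps'\rd{\mce}$. Because $\lamtwo\eps'$ vanishes precisely on $D\times S$, $\mcq$ is supported there and decomposes as $\bigoplus_j\mcq_j$ with $\mcq_j$ supported on $S_j=\cl{x_j}\times S$; letting $\mce_j\supseteq\mce$ be the kernel of $\rd{1\times g}^\ast\mcv\to\mcq_j$ with inclusion $\kappa_j$, one sees that $S_j$ is the zero divisor of $\lamtwo\kappa_j$, so $\mcq_j$ is annihilated by the ideal sheaf of $S_j$ and hence $\mcq_j\cong\rd{\iota_j'}_\ast\msl_j$ for a coherent sheaf $\msl_j$ on $S$. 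I expect the main obstacle to be showing that $\msl_j$ is in fact a line bundle: this needs, at the stalk over $\rd{x_j,s}$ with a uniformizer $\zeta$ of $\mco_{X,x_j}$, a Smith-normal-form statement for a $2\times2$ matrix over the non-principal ring $R=\mco_{X,x_j}\otimes_\mbc\mco_{S,s}$ with determinant $\zeta\otimes 1$, namely that it can be brought to $\tx{diag}\rd{1,\zeta\otimes 1}$ by unit matrices; the point is that reducing the determinant identity modulo $\rd{\zeta\otimes 1}$ forces one of the four "constant" entries to be a unit of $R$, after which explicit elementary row/column operations finish it. This gives $\rd{\msl_j}_s\cong\mco_{S,s}$, so $\msl_j\in\pic(S)$.

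Each surjection $g^\ast\mcv_{x_j}\to\msl_j$ then determines, by the universal property of $\mcp_j=\mbp\rd{\mcv_{x_j}^\ast}$ in Proposition \ref{prop:projuniv}, a morphism $f_j\colon S\to\mcp_j$ over $g$; I assemble these into $f=f_1\times_{\pic^d X}\cdots\times_{\pic^d X}f_{4g-4}\colon S\to\mcp$, and from the construction one reads off that $\pi'=f^\ast\pi$, where $\pi$ is the map defining $\mcf$ in \S\ref{sec:construction}. Next I would check, exactly as in Eqs.(\ref{eq:corresp1})--(\ref{eq:corresp2}), that $f_j(s)=[1:0]$ iff $\chi_s(x_j)=0$ and $f_j(s)=[0:1]$ iff $\psi_s(x_j)=0$; since each $\rd{\mce_s,\chi_s,\psi_s}$ is a stable SU(1,2) Higgs bundle, Proposition \ref{prop:su12stab2} bounds the numbers of components lying in $[0:1]$ and $[1:0]$, so $f$ factors through $\mcy\subset\mcp$. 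Finally, applying $f^\ast$ to the short exact sequence $0\to\mcf\xrightarrow{\eps}\tilde\mcv\xrightarrow{\pi}\mss\to 0$ and using $\pi'=f^\ast\pi$ produces an isomorphism $\alp\colon\rd{1\times f}^\ast\mcf\xrightarrow{\sim}\mce$ intertwining $\rd{1\times f}^\ast\eps$ with $\eps'$; comparing second exterior powers shows $\lamtwo\alp=1$, and a diagram chase through $\phi_\mce$ using Lemma \ref{lem:phie} then gives $\psi=\rd{\rd{1\times f}^\ast\gam}\circ\alp^{-1}$ and $\chi=\alp\circ\rd{1\times f}^\ast\bet$. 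Hence $\rd{\mce,\chi,\psi,\mco_S}\sim\rd{1\times f}^\ast\rd{\mcf,\bet,\gam,\mco_\mcy}$, which is precisely the local universal property claimed.
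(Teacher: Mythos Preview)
Your proposal is correct and follows essentially the same route as the paper: reduce to $M=\mco_S$, build $\eps'=\eps_1'\oplus\eps_2'$ from $\chi,\psi$ via $\phi_\mce$, analyze the quotient $\mcq$ and its pieces $\mcq_j$, prove the Smith-normal-form lemma to see each $\msl_j$ is invertible, invoke Proposition~\ref{prop:projuniv} to produce $f_j$ and hence $f$, check $f$ lands in $\mcy$ using the stability criterion, and compare $\eps'$ with $(1\times f)^\ast\eps$ via $\lamtwo\alp=1$ and Lemma~\ref{lem:phie}. One small caution: the unit-entry step in the Smith normal form does not follow from reducing $\det\varphi=\zeta\otimes 1$ modulo $(\zeta\otimes 1)$ alone (that only gives $a_{11}a_{22}-a_{12}a_{21}=0$); as in the paper you must also expand $\varphi_{ij}=1\otimes a_{ij}+(\zeta\otimes 1)b_{ij}$ and use that the resulting linear-in-$\zeta$ relation equals $1$ to force some $a_{ij}$ to be a unit.
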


\section{The GIT quotient}
\label{sec:git}

Let $\rd{\mcf,\bet,\gam,\mco_\mcy}\in \mcm\rd{\mcy}$ be the universal family parametrized by $\mcy$. Recall the $\mbc^\times$-action
\be
\sig: \mbc^\times\times \mcp\to\mcp
\ee 
defined in statement of Theorem \ref{thm:main} and let $\sig_c: \mcp\to\mcp$ be given by $p\mapsto \sig(c,p)$ for $c\in \mbc^\times$. The conditions defining $\mcy \subset \mcp$ is preserved therefore $\sig$ gives a $\mbc^\times$-action on $\mcy$. 

Furthermore, we have 

\begin{prop} \label{prop:cstarorb}
For $s, t\in \mcy$, $\rd{\mcf_s,\bet_s,\gam_s}\sim \rd{\mcf_t,\bet_t,\gam_t}$ as SU(1,2) Higgs bundles if and only if they lie on the same $\mbc^\times$ orbit.
\end{prop}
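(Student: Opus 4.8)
The plan is to treat the two implications separately, working in both with the explicit realization of $\mcf_s$ and $\mcf_t$ as subsheaves of one fixed rank-two bundle. I would first reduce to the case $p_\mcy(s)=p_\mcy(t)$: the action $\sig$ preserves the fibres of $p_\mcy$, and an isomorphism $\alp$ realizing $(\mcf_s,\bet_s,\gam_s)\sim(\mcf_t,\bet_t,\gam_t)$ in the sense of Definition~\ref{def:modulifunctor} induces $\lamtwo\alp\colon\lamtwo\mcf_s\xrightarrow{\sim}\lamtwo\mcf_t$, so that $p_\mcy(s)=p_\mcy(t)$ since $\lamtwo\mcf_y$ is by construction the inverse of the point $p_\mcy(y)\in\pic^d X$. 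Set $L:=p_\mcy(s)=p_\mcy(t)$ and $V:=L^{-2}K_X\oplus LK_X$. Then, as in \S\ref{sec:construction}, $\mcf_s$ and $\mcf_t$ are subsheaves of $V$ that coincide with $V$ on $X\setminus D$, where (after fixing one global scalar on each) $\gam=\eps_2$ is the projection of $V$ onto $LK_X$, $\eps_1$ is the projection onto $L^{-2}K_X$, and, by Definition~\ref{def:phimce}, $\tx{im}\,\bet$ is the summand $LK_X$. Unwinding the $\mbc^\times$-action on $\mcp_j=\mbp(\mcv_{x_j}^\ast)$, which is induced by a diagonal automorphism of $\mcv_{x_j}^\ast$ relative to its two natural line subbundles, one finds $\mcf_{\sig_c s}=\Theta_c(\mcf_s)$, where $\Theta_c$ is the automorphism of $V$ equal to the identity on $L^{-2}K_X$ and to $c^{-1}$ on $LK_X$, so $\lamtwo\Theta_c=c^{-1}$.

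For ``same orbit $\Rightarrow$ equivalent'', with $t=\sig_c s$ I would check, using Definition~\ref{def:phimce} together with Lemma~\ref{lem:phie}, that $\Theta_c|_{\mcf_s}\colon\mcf_s\to\mcf_{\sig_c s}$ satisfies $\gam_{\sig_c s}\circ\Theta_c=c^{-1}\gam_s$ and $\Theta_c\circ\bet_s=c^{-1}\bet_{\sig_c s}$ (up to the canonical identification of the domains; the appearance of $c$ here comes, through Lemma~\ref{lem:phie}, from $\lamtwo\Theta_c=c^{-1}$), together with $\lamtwo(\Theta_c|_{\mcf_s})=c^{-1}$. These agree with the intertwining relations of Definition~\ref{def:modulifunctor} only up to a power of $c$, which I would absorb by replacing $\Theta_c|_{\mcf_s}$ with $\mu\,\Theta_c|_{\mcf_s}$ for a choice of $\mu\in\mbc^\times$ with $\mu^3=c^2$ --- exactly the cube occurring in the relation $a\colon M\to M'(M'')^3$ of Definition~\ref{def:modulifunctor} --- after which all the relations hold on the nose, so $(\mcf_s,\bet_s,\gam_s)\sim(\mcf_{\sig_c s},\bet_{\sig_c s},\gam_{\sig_c s})$.

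For ``equivalent $\Rightarrow$ same orbit'', let $\alp\colon\mcf_s\to\mcf_t$ realize the equivalence and put $\nu:=\lamtwo\alp\in\mbc^\times$, so that $(\lamtwo\alp)^{-t}\otimes1$ acts as the scalar $\nu^{-1}$. Restricting to $X\setminus D$, where $\alp$ becomes an automorphism of $V$, the relation $\gam_t\circ\alp=\nu^{-1}\gam_s$ (using that $\gam$ is, up to a unit, the projection onto $LK_X$ with kernel $L^{-2}K_X$) forces $\alp$ to preserve $L^{-2}K_X$ and to act on $LK_X$ by a constant; since $\lamtwo\alp=\nu$ is then constant, the action of $\alp$ on $L^{-2}K_X$ is constant as well; and the relation $\alp\circ\bet_s=\nu^{-1}\bet_t$, which forces $\alp$ to carry $\tx{im}\,\bet_s=LK_X$ onto $\tx{im}\,\bet_t=LK_X$, annihilates the remaining off-diagonal part. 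Thus $\alp|_{X\setminus D}$ is a constant diagonal automorphism of $V$, necessarily of the form $(\tx{constant})\cdot\Theta_c$ with $c=\nu^{3}$. Two $\mco_X$-linear maps $\mcf_s\to V$ agreeing on the dense open $X\setminus D$ coincide ($V$ being torsion-free), so $\alp$ is globally induced by $(\tx{constant})\cdot\Theta_c$, whence $\mcf_t=\alp(\mcf_s)=\Theta_c(\mcf_s)=\mcf_{\sig_c s}$. Finally the map $y\mapsto\mcf_y$ is injective on $\mcy$: from the exact sequence $0\to\mcf_y\to V\to\mco_D\to0$ of \S\ref{sec:construction} one recovers each $\ell_j\in\mcp_j$ as the annihilator in $\mcv_{x_j}^\ast$ of the image of $\mcf_y\to V$ over $x_j$ (compare Eqs.\,(\ref{eq:corresp1})--(\ref{eq:corresp2})), so $\mcf_t=\mcf_{\sig_c s}$ yields $t=\sig_c s$.

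The step I expect to be the crux is the rigidity invoked in the last paragraph --- that the two intertwining relations, together with $\lamtwo\alp$ being a scalar, leave no freedom in $\alp|_{X\setminus D}$ beyond an overall constant and the maximal torus of $V$ --- together with the bookkeeping of the global units relating $\bet$ and $\gam$ on the two sides; the latter are controlled by $\mco^\times$ of a fibre of $p_\mcy$, which is just $\mbc^\times$ because the complement of $\mcy$ in each fibre of $\mcp$ has codimension $\ge2$. This is precisely where the construction of $\bet$ out of $\eps_1$ and $\phi_\mcf$ (Definition~\ref{def:phimce}, Remark~\ref{rmk:locform}) and the simplicity of the zeros of $q=\gam\circ\bet$ --- equivalently, that $\mcf_s$ and $\mcf_t$ differ from $V$ only along the reduced divisor $D$ --- enter essentially.
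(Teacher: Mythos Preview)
Your argument is correct and follows essentially the same route as the paper's: reduce to a common $L$, then show that an isomorphism $\alp$ realizing the equivalence becomes, after composing with the inclusions $\eps_s,\eps_t$ into $V=L^{-2}K_X\oplus LK_X$, a constant diagonal automorphism of $V$, which is exactly what the $\mbc^\times$-action provides. The paper packages both directions into a single chain of ``iff''s via the evaluation description of $\mcf_y$ at the end of \S\ref{sec:construction} (rather than restricting to $X\setminus D$, extending, and invoking injectivity of $y\mapsto\mcf_y$) and normalizes with $\tx{diag}(c^{-2},c)$, $c=\lamtwo\alp$, so no cube root is needed; your aside about $\mco^\times$ of a fibre is unnecessary, since $\bet_y,\gam_y$ are by construction the restrictions of one fixed pair $(\bet,\gam)$ on $X\times\mcy$.
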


\begin{proof}
Let $\alp: \mcf_s\to \msf_t$ be the isomorphism as in Def \ref{def:modulifunctor}. This induces isomorphism $\lamtwo \mcf_s^\ast\xrightarrow{\sim}\lamtwo \mcf_t^\ast$ therefore also $\msl_{p_\mcy(s)}\cong \msl_{p_\mcy(t)}$ with $\msl$ the Poincar\'e line bundle of degree $d$. Therefore we have $p_\mcy(s)=p_\mcy(t)$. Therefore we may assume $\lamtwo \mcf_s^\ast=\lamtwo \mcf_t^\ast=L$ a line bundle of degree $d$ on $X$ and $\lamtwo \alp=c$ a constant on $X$. We have the following commutative diagram:
\begin{center}
\begin{tikzcd}
LK_X^{-1} \arrow[d,"c^{-1}"] \arrow[r,"\bet_s"] & \mcf_s \arrow[d,"\alp"] \arrow[r,"\gam_s"] & L\otimes K_X \arrow[d,"c^{-1}"] \\
LK_X^{-1} \arrow[r,"\bet_t"] & \mcf_t \arrow[r,"\gam_t"] & LK_X
\end{tikzcd}
\end{center}
Let $c=\Lambda^2\alp$ and define $\eps_{2,t}=\gam_t$, $\eps_{2,s}=\gam_s$ and $\eps_{1,t}$ (resp. $\eps_{1,s}$) from $\bet_t$ (resp. $\bet_s$) as above, we have by Lemma \ref{lem:phie} the following diagram commutes
\begin{center}
\begin{tikzcd}
\mcf_s \arrow[r,"\phi_{\mcf_s}^{-1}"] \arrow[d,"\alp"] & \mcf_s^\ast \otimes L^{-1} \arrow[d,"\alp^{-t}\otimes c"] \arrow[r,"-\bet_s^t\otimes 1"] & L^{-1}K_X\otimes L^{-1} \arrow[d,"c\otimes c"]\\
\mcf_t \arrow[r,"\phi_{\mcf_t}^{-1}"] & \mcf_t^\ast\otimes L^{-1} \arrow[r,"-\bet_t^t\otimes 1"] & L^{-1}K_X\otimes L^{-1}
\end{tikzcd}
\end{center}
It follows that $s, t\in \mcy$ represents isomorphic SU(1,2) Higgs bundle iff there exists $c\in \mbc^\times$ such that the following diagram commutes
\begin{center}
\begin{tikzcd}
\mcf_s \arrow[r,"\eps_s"] \arrow[d,"\alp"] & V=L^{-2}K_X\oplus LK_X \arrow[d,"\tx{diag}\rd{c^{-2},c}"] \\
\mcf_t \arrow[r,"\eps_t"] & V=L^{-2}K_X\oplus LK_X
\end{tikzcd}
\end{center}
Equivalently let $s=\rd{s_1,\ldots,s_{4g-4}}$, $t=\rd{t_1,\ldots,t_{4g-4}}$, then by comments at the end of \S \ref{sec:construction} there are non-zero elements $\xi_j\in s_j$, $\eta_j\in t_j$ such that the following diagram commutes
\begin{center}
\begin{tikzcd}
V \arrow[d,"\tx{diag}\rd{c^{-2},c}" left] \arrow[r,"\sum_j \tx{ev}_{\xi_j}"] & \mco_D \arrow[d,equal] \\
V \arrow[r,"\sum_j \tx{ev}_{\eta_j}"] & \mco_D
\end{tikzcd}
\end{center}
The endomorphism on $V$ gives the $\mbc^\times$ action defined by $\sig\rd{c^{-3},\cdot}$. The argument above shows that $s,t\in Y$ corresponds to isomorphic SU(1,2) Higgs bundles iff there is $c'\in \mbc^\times$ and $t=\sig(c',s)$, i.e. $s$ and $t$ lie in the same $\mbc^\times$-orbit.
\end{proof}

Let $N=4g-4$. Fix $\msl'$ a very ample line bundle on $\pic^d X$ such that 
\be
\msl_{x_j}^{-2}\msl', \,\, \msl_{x_j}\msl'
\ee
are generated by global sections for $j=1,\ldots,4g-4$. Note that such $\msl'$ always exist since $\pic^d X$ is projective and by a theorem of Serre (Thm 5.17 \S II.5 \cite{Har77}). Consider line bundle on $\mcp$ given by
\begin{align}
& \tilde\msl=\rd{\bigotimes_{j=1}^{N}p_j^\ast \mco_{\mcp_j}(N)}\otimes p_\mcp^\ast \rd{\msl'}^{\otimes N}=\bigotimes_{j=1}^{N}p_j^\ast \rd{\mco_{\mcp_j}\rd{1}\otimes \rd{p^{(j)}}^\ast\msl'}^{\otimes N} \nonumber \\
&=\bigotimes_{j=1}^N p_j^\ast \mco_{\mcp_j'}\rd{N}
\end{align}
where $\mco_{\mcp_j'}\rd{N}=\mco_{\mcp_j'}\rd{1}^{\otimes N}$ and $\mco_{\mcp_j'}\rd{1}$ is the invertible sheaf associated to the projective bundle
\be
\mcp_j'=\mbp\rd{\msl_{x_j}^2\rd{\msl'}^{-1}\oplus \msl_{x_j}\rd{\msl'}^{-1}}\,.
\ee
As in \S \ref{sec:proj} we have the surjective sheaf map
\be \label{eq:deftilpij}
\tilde\pi_j: \rd{p^{(j)}}^\ast\msl_{x_j}^{-2}\msl'\oplus \rd{p^{(j)}}^\ast\msl_{x_j}\msl'\to \mco_{\mcp_j'}(1)\,.
\ee
We have $\mcp_j=\mbp\rd{\msl_{x_j}^{-2}\oplus \msl_{x_j}}$ and the line subbundles corresponding to first (resp. second) summand gives global section denoted by $[0:1]$ (resp. $[1:0]$). Let $U\subset \pic^d X$ be any open subset on which $\msl_{x_j}$ is trivialized, then under corresponding trivialization of $\mcp_j$, the global sections above are given by constant function $[0:1]$ resp. $[1:0]$.

Let $\ell_0\in\pic^d X$ be a fixed base point and let $p_0=\rd{p_{0,1},\ldots,p_{0,4g-4}}\in \atp{\mcp}{\ell_0}$ with $p_{0,j}\in [0:1]$, the global section of $\mcp_j\to \pic^d X$, for all $j$. We have that $p_0$ is a fix by $\mbc^\times$-action $\sig$. A linearization of $\sig$ on $\tilde\msl$ is uniquely determined by its action on the fiber $\atp{\tilde\msl}{p_0}$. We consider the $\tilde\msl$-linear $\mbc^\times$ action $\sig^{(n)}$ covering $\sig$ and inducing
\begin{align}
& \mbc^\times\times\atp{\tilde \msl}{p_0}\to \atp{\tilde \msl}{p_0} \nonumber \\
& \rd{c,v}\mapsto c^{-n} v
\end{align}
for some $n\in\mbz$. Furthermore note that $\sig$ also fixes the loci 
\be
D_0:=[0:1]\underset{\pic^d X}{\times}\ldots\underset{\pic^d X}{\times}[0:1]\subset \mcp
\ee
and the linearization on $\tilde\msl$ acts by $\mbc^\times\times \atp{\tilde\msl}{D_0}\to \atp{\tilde\msl}{D_0}$,
\be \label{eq:actiononD0}
\rd{c,v}\mapsto c^{-n}v 
\ee
for any $v\in \atp{\tilde\msl}{p}$ where $p\in D_0$.

For $\ull=\rd{\ell_1,\ldots,\ell_{4g-4}}$ in the fiber $\atp{\mcp}{p}$ over some point $p\in\pic^d X$ where $\ell_j\in \atp{\mcp_j}{p}$, denote by
\begin{align}
\mcj_1\rd{\ull}=\left\{1\le j\le N\middle| \ell_j\in [0:1]\right\}\,, \nonumber \\
\mcj_2\rd{\ull}=\left\{1\le j\le N\middle| \ell_j\in [1:0]\right\}\,. \label{eq:defmcj}
\end{align}
Let $n_j=\abs{\mcj_j}$ with $j=1,2$.

\begin{lem} \label{lem:identifysss}
For $\tilde\msl$-linear action $\sig^{(n)}$ with $0\le n\le N$ on $\mcp$, we have that the set of semistable (resp. stable) points are given by 
\begin{align}
& \mcp^{\tx{SS}}\rd{\tilde\msl}=\left\{\ull\middle| n_1\rd{\ull}\le n,\,\, n_2\rd{\ull}\le N-n \right\} \label{eq:idss}\\
& \mcp^{\tx{S}}\rd{\tilde\msl}=\left\{\ull\middle| n_1\rd{\ull}< n,\,\, n_2\rd{\ull}< N-n \right\} \label{eq:ids}
\end{align}
\end{lem}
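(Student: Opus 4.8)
The plan is to apply the Hilbert--Mumford numerical criterion, working fiberwise over $\pic^d X$.

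First I would reduce to two one-parameter subgroups. Since $\mbc^\times$ is a one-dimensional torus, every nontrivial one-parameter subgroup has the form $\lam_k\colon t\mapsto t^k$, $k\in\mbz\setminus\cl{0}$, and because $\lam_k$ and $\lam_{\tx{sgn}(k)}$ have the same $t\to 0$ limit with proportional weights there, the Hilbert--Mumford weight satisfies $\mu^{\tilde\msl}(\ull,\lam_k)=|k|\,\mu^{\tilde\msl}(\ull,\lam_{\tx{sgn}(k)})$. Hence $\ull\in\mcp$ is semistable (resp.\ stable) for $\sig^{(n)}$ precisely when $\mu^{\tilde\msl}(\ull,\lam_+)$ and $\mu^{\tilde\msl}(\ull,\lam_-)$ are both $\ge 0$ (resp.\ both $>0$), where $\lam_+=\lam_1$ and $\lam_-=\lam_{-1}$. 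Moreover $\sig$ fixes every point of $\pic^d X$, and over a point $p\in\pic^d X$ the fiber $\atp{\mcp}{p}$ is $\rd{\mbp^1}^N$ on which $\tilde\msl$ restricts to $\bigotimes_{j=1}^N\mco_{\mbp^1}(N)$ (one factor per $\mbp^1$) tensored with the $\mbc^\times$-trivial line $\atp{p_\mcp^\ast\rd{\msl'}^{\otimes N}}{p}$; so the two weights above depend only on the fiber containing $\ull$ and may be computed there, with $\sig$ acting by $[x_0:x_1]\mapsto[cx_0:x_1]$ on each factor.

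Next I would compute the limits and the weights. On each $\mbp^1$-factor the only $\sig$-fixed points are $[0:1]$ and $[1:0]$; as $t\to 0$ every other point flows to $[0:1]$ under $\lam_+$ and to $[1:0]$ under $\lam_-$, the two fixed points being held in both cases. Consequently $\lim_{t\to 0}\sig(\lam_+(t),\ull)$ is the coordinate fixed point carrying $[1:0]$ exactly in the slots $\mcj_2\rd{\ull}$ and $[0:1]$ in the other $N-n_2\rd{\ull}$ slots, and symmetrically $\lim_{t\to 0}\sig(\lam_-(t),\ull)$ carries $[0:1]$ in the slots $\mcj_1\rd{\ull}$ and $[1:0]$ elsewhere. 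I would then determine the character through which $\mbc^\times$ acts on the line $\tilde\msl$ over an arbitrary coordinate fixed point: from the surjection $\pi_{\mcv_{x_j}^\ast}$ of \S \ref{sec:proj} one reads off that the $\mbc^\times$-weight of $\mco_{\mcp_j}(1)$ at $[1:0]$ differs from its weight at $[0:1]$ by one unit, hence the weight of $\mco_{\mcp_j}(N)$ jumps by $N$, and since the $\msl'$-part is fixed the weight over a coordinate fixed point is an affine function of the number of its $[1:0]$-slots; the additive constant is pinned down by Eq.(\ref{eq:actiononD0}), that $\mbc^\times$ acts on $\atp{\tilde\msl}{D_0}$ through $c\mapsto c^{-n}$. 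Feeding the two limits into $\mu^{\tilde\msl}(\ull,\cdot)$ then presents $\mu^{\tilde\msl}(\ull,\lam_+)$ as an affine function of $n_2\rd{\ull}$ with threshold $N-n$ and $\mu^{\tilde\msl}(\ull,\lam_-)$ as an affine function of $n_1\rd{\ull}$ with threshold $n$, and requiring both to be $\ge 0$ (resp.\ $>0$) is exactly $n_1\rd{\ull}\le n$ and $n_2\rd{\ull}\le N-n$ (resp.\ the strict inequalities).

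The step I expect to be the main obstacle is the sign- and normalization-bookkeeping in this last computation: one must fix mutually consistent conventions for which $\mbp^1$-fixed point is called $[0:1]$, for the $\mbc^\times$-weight of $\mco_{\mcp_j}(1)$ at the two fixed points (which involves the rank-two identification $\mcp_j=\mbp(\mcv_{x_j}^\ast)$ and the diagonal $\mbc^\times$-action respecting the decomposition of $\mcv_{x_j}^\ast$), for the flow directions of $\lam_\pm$, and for the sign in the Hilbert--Mumford pairing, and then check that the normalizing character $c^{-n}$ at $D_0$ is the one that places the two semistability thresholds at $n$ and $N-n$ rather than at shifted values. Granting that, the $\le$ versus $<$ dichotomy between $\mcp^{\tx{SS}}\rd{\tilde\msl}$ and $\mcp^{\tx{S}}\rd{\tilde\msl}$ is automatic, since $\ull$ fails to be stable exactly when one of $\mu^{\tilde\msl}(\ull,\lam_+)$, $\mu^{\tilde\msl}(\ull,\lam_-)$ vanishes, i.e.\ exactly on the walls $n_1\rd{\ull}=n$ or $n_2\rd{\ull}=N-n$.
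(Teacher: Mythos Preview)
Your approach via the Hilbert--Mumford criterion is correct and genuinely different from the paper's proof. The paper works directly from the definition of (semi)stability through invariant sections: it exhibits an explicit monomial basis $\mu_{\ulm,\uxi,\ueta}$ of $H^0\rd{\mcp,\tilde\msl^r}$, computes that a monomial is $\mbc^\times$-invariant iff $\sum_j m_j/(Nr)=n$, determines which monomials are nonvanishing at a given $\ull$ in terms of the index sets $\mcj_1\rd{\ull}$, $\mcj_2\rd{\ull}$, and then for stability analyzes directly when the $\mbc^\times$-orbits on the affine opens $\mcp_s$ are closed. Your route trades this explicit section analysis for the numerical criterion, reducing everything to the two weights $\mu^{\tilde\msl}\rd{\ull,\lam_\pm}$. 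Each has its merits: the paper's argument is self-contained and avoids invoking Hilbert--Mumford as a black box, while yours is shorter once that theorem is granted and makes the thresholds $n$ and $N-n$ emerge more transparently from the flow. The bookkeeping you flag is indeed the only delicate point, but the normalization you describe---the weight of $\mco_{\mcp_j}(1)$ shifting by one unit between $[0:1]$ and $[1:0]$, together with the anchor $c^{-n}$ on $D_0$---is enough to pin it down.
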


\begin{proof}
We begin by recalling the notion of (semi)stability of $\tilde\msl$-linear action (see \S 3.5 in \cite{New78}). A point $\ull=\rd{\ell_1,\ldots,\ell_N}\in \mcp$ with $\ell_j\in \mcp_j$ is semistable iff there is $r\ge 1$ and an invariant section $s$ of $\tilde\msl^r$ such that $s\rd{\ull}\neq 0$ and $\mcp_s=\left\{ x\in \mcp\middle| s\rd{x}\neq 0 \right\}$ is affine. Note that by choice of $\msl'$, the line bundle $\tilde\msl^r$ is ample for any $r\ge 1$ therefore each open subset $\mcp_s$ is affine.

A point $\ull\in \mcp$ is stable if above holds with non-vanishing invariant section $s$ and furthermore $\mbc^\times$ action on $\mcp_s$ is closed, i.e. each orbit is closed and in addition dimension of $\mbc^\times$-orbit is equal to $\dim\mbc^\times=1$, or equivalently that $\ull$ is not fixed.

By arguments in \S \ref{sec:proj} it is straightforwad to see that the space $H^0\rd{\mcp,\tilde\msl^r}$ has a basis consisting of monomial sections of the form
\be
\mu_{\ulm,\uxi,\ueta}=\bigotimes_{j=1}^N \tilde\xi_j^{m_j}\otimes \tilde\eta_j^{Nr-m_j}
\ee
where $\uxi=\rd{\xi_1,\ldots,\xi_{4g-4}}$, $\ueta=\rd{\eta_1,\ldots,\eta_{4g-4}}$, $\xi_j$ (resp. $\eta_j$) are non-zero sections of $\msl_{x_j}^{-2}\msl'$ (resp. $\msl_{x_j}\msl'$) and
\begin{align}
& \tilde\xi_j=p_j^\ast \rd{\tilde\pi_j\rd{\rd{p^{(j)}}^\ast \xi_j }}\,, \nonumber \\
& \tilde\eta_j=p_j^\ast \rd{\tilde\pi_j\rd{\rd{p^{(j)}}^\ast \eta_j }}\,,
\end{align}
and where $\ulm$ runs through all multiindices with $0\le m_j\le Nr$ and $\xi_j$ (resp. $\eta_j$) run through a basis of $H^0(\pic^d X,\msl_{x_j}^{-2}\msl')$ (resp. $H^0(\pic^d X,\msl_{x_j}\msl')$). 

By Eq.(\ref{eq:actiononD0}) we see that for any $\uxi$, $\ueta$, $\sig^{(n)}: \tilde\msl^r\to \sig_c^\ast \tilde\msl^r$ maps
\be
\prod_j \tilde\eta_j^{Nr}\mapsto c^{-nr}\prod_j \sig_c^\ast{\tilde\eta_j}^{Nr}
\ee
The action on all other monomial sections follows from this since they are all related by appropriate powers of rational functions $\tilde\xi_j/\tilde\eta_j$ on $\mcp$ with $\sig_c^\ast (\tilde\xi_j/\tilde\eta_j)=c\tilde\xi_j/\tilde\eta_j$. Therefore $\sig^{(n)}$ maps
\be
\mu_{\ulm,\uxi,\ueta}\mapsto c^{-Nnr+\sum_{j=1}^N m_j}\sig_c^\ast \mu_{\ulm,\uxi,\ueta}
\ee
and a monomial section as above is invariant under $\sig^{(n)}$  iff 
\be \label{eq:balancing}
\sum_{j=1}^N\frac{m_j}{Nr}=n\,.
\ee

Given multiindex $\ulm=\rd{m_1,\ldots,m_N}$ with $1\le m_j\le Nr$, denote by
\begin{align}
& \tilde\mcj_1\rd{\ulm}=\left\{1\le j\le N\middle| \,\, m_j=Nr\right\} \nonumber \\
& \tilde\mcj_2\rd{\ulm}=\left\{1\le j\le N\middle|\,\, m_j=0 \right\}\,,
\end{align}
and let $\tilde n_j\rd{\ulm}=\abs{\tilde\mcj_j}$ with $j=1,2$. 
It is easy to verify that $\mu_{\ulm}\rd{\ull}\neq 0$ iff we have
\be
\mcj_1\rd{\ull}\subseteq \tilde\mcj_1\rd{\ulm},\,\, \mcj_2\rd{\ull}\subseteq \tilde\mcj_2\rd{\ulm}\,,
\ee 
in which case we have $\tilde n_j\rd{\ulm}\ge n_j\rd{\ull}$ for $j=1,2$.

For $\ull$ with $n_1\rd{\ull}\le n$ and $n_2\rd{\ull}\le N-n$, suppose $p_\mcp(\ull)=p\in\pic^d X$. It is easy to see that there exists $\ulm$ with $\mcj_j\rd{\ull}\subseteq \mcj_j\rd{\ulm}$ for $j=1,2$ satisfying Eq.(\ref{eq:balancing}). Since $\msl_{x_j}^{-2}\msl'$, $\msl_{x_j}\msl'$ are very ample for each $j$, there are $\uxi$, $\ueta$ with $\xi_j(p)$, $\eta_j(p)\neq 0$ for all $j$. It follows that there exists $\mbc^\times$-invariant section $\mu_{\ulm,\uxi,\ueta}$ nonvanishing at $\ull$.

Conversely if there is a $\mbc^\times$-invariant section $s\in\msl^r$ with $s\rd{\ull}\neq 0$. Since monomials form a basis of the space of global sections, $s$ is a finite $\mbc$-linear combination of monomials. Therefore there is one term $
\mu_{\ulm,\uxi,\ueta}$ nonvanishing at $\ull$. Furthermore since $\mbc^\times$ action is diagonal with respect to this basis, each monomial term must itself be invariant. Therefore we have 
\begin{align}
&\sum_{j=1}^N \frac{m_j}{Nr}=\sum_{j\in\mcj_1\rd{\ull}}\frac{m_j}{Nr}+\sum_{j\in\mcj_2\rd{\ull}}\frac{m_j}{Nr}+\sum_{j\notin \mcj_1\rd{\ull}\coprod\mcj_2\rd{\ull}}\frac{m_j}{Nr}\nonumber \\
&=n_1\rd{\ull}+0+\sum_{j\notin \mcj_1\rd{\ull}\coprod\mcj_2\rd{\ull}}\frac{m_j}{Nr}=n\,.
\end{align}
It follows that $n_1\rd{\ull}\le n\le N-n_2\rd{\ull}$. Therefore identification Eq.(\ref{eq:idss}) follows.

Given $\ulm=\rd{m_1,\ldots,m_N}$ with $1\le m_j\le Nr$, the subset $P_{\mu_{\ulm,\uxi,\ueta}}$ is given by condition for $\ull=\rd{\ell_1,\ldots,\ell_N}$ with $\ell_j\in \mcp_j$:
\be
\begin{cases}
\ell_j\notin [0:1] & j\in \tilde\mcj_1\rd{\ulm} \\
\ell_j\notin [1:0] & j\in \tilde\mcj_2\rd{\ulm} \\
\ell_j\notin [1:0]\tx{ or }[1:0]  & \tx{otherwise}
\end{cases}
\ee
It is straightforward to see that this is a fiber subbundle of $\mcp$ over $\pic^d X$ where $\sig_c$ preserves the fiber and  there is an equivariant isomorphism from each fiber to
\be
\mbc^{\tilde n_1\rd{\ulm}}\times \mbc^{\tilde n_2\rd{\ulm}}\times \rd{\mbc^{\times}}^{N-\tilde n_1\rd{\ulm}-\tilde n_2\rd{\ulm}}
\ee
where $\sig_c$ acts by $c^{-1}$ on $\mbc^{\tilde n_1}$; $c$ on $\mbc^{\tilde n_2}$ and on the $\mbc^\times$ factors. It follows that the $\mbc^\times$ action on $P_{\mu_{\ulm,\uxi,\ueta}}$ is closed iff
\be
N-\tilde n_1\rd{\ulm}-\tilde n_2\rd{\ulm}>0\,.
\ee

For $\ull$ with $n_1\rd{\ull}=n$ by Eq.(\ref{eq:balancing}), any $\ulm$ with $\mu_{\ulm,\uxi,\ueta}$ nonvanishing at $\ull$ and $\mbc^\times$-invariant must satisfy that $m_j=0$ for $j\neq \mcj_1\rd{\ull}\coprod \mcj_2\rd{\ull}$, therefore we have 
\be
\tilde\mcj_1\rd{\ulm}=\cl{1,\ldots,N}-\mcj_2\rd{\ull},\,\, \tilde \mcj_2\rd{\ulm}=\mcj_2\rd{\ull}\,.
\ee
Note that these conditions determines a unique $\ulm$ with $N-\tilde n_1\rd{\ulm}-\tilde n_2\rd{\ulm}=0$. Therefore for any $r\ge 1$ and any $\mbc^\times$-invariant section $s\in \mcl^r$ must be of the form $\mu_{\ulm,\uxi,\ueta}$ for some $\uxi$, $\ueta$. Hence $\mbc^\times$ action on $P_s$ is not closed. 

For $\ull$ with $n_2\rd{\ull}=N-n$ by Eq.(\ref{eq:balancing}), any $\ulm$ with $\mu_{\ulm}\rd{\uxi,\ueta}$ nonvanishing at $\ull$ and $\mbc^\times$-invariant must satisfy $m_j=Nr$ for $j\neq \mcj_1\rd{\ull}\coprod \mcj_2\rd{\ull}$ and we have
\be
\tilde\mcj_2\rd{\ulm}=\cl{1,\ldots,N}-\mcj_2\rd{\ull},\,\, \tilde \mcj_1\rd{\ulm}=\mcj_1\rd{\ull}\,.
\ee
Similar argument as above shows that for any $r\ge 1$ and $\mbc^\times$-invariant section $s$ of $\mcl^r$ nonvanishing at $\ull$ must have $\mbc^\times$-action on $P_s$ not closed.

Conversely for $\ull$ with $n_1\rd{\ull}<n<N-n_2\rd{\ull}$ we showed above that there are $\ulm$, $\uxi$, $\ueta$ with $\mu_{\ulm,\uxi,\ueta}$ nonvanishing at $\ull$ and $\mbc^\times$-invariant, with action $\sig_c$ on $P_{\mu_{\ulm}}$ closed. Therefore Eq.(\ref{eq:ids}) follows.
\end{proof}

We are now ready to prove Theorem \ref{thm:main}

\begin{proof}
By Theorem \ref{thm:locuniv}, the family $\rd{\mcf,\bet,\gam,\mco_\mcy}$ parametrized by $\mcy$ satisfy the local universal property. Consider the $\tilde\msl$-linear $\mbc^\times$ action on $\mcy$ as discussed above with
\be
n=2\rd{g-1-d}
\ee
It follows from Lemma \ref{lem:identifysss} that $\mcy=\mcp^{\tx{S}}\rd{\tilde\msl}$ and by Prop.\ref{prop:cstarorb} $\rd{\mcf_s,\bet_s,\gam_s}\sim \rd{\mcf_t,\bet_t,\gam_t}$ iff $s,t\in\mcy$ lie on the same $\mbc^\times$ orbit. The conclusion about coarse moduli space now follows from Theorem \ref{thm:Newstead}.

Consider $\ull\in \mcp$ with $p_\mcp(\ull)=p\in\pic^d X$ and $n_1\rd{\ull}=n$ (resp. $n_2\rd{\ull}=N-n$). Let $\ull_\infty\in \mcp$ be the unique point such that $p_\mcp(\ull_{\infty})=p$ and $\mcj_1\rd{\ull_\infty}=\mcj_1\rd{\ull}$ (resp. $\mcj_1\rd{\ull_\infty}=\cl{1,\ldots,N}-\mcj_1\rd{\ull}$) and $\mcj_2\rd{\ull_\infty}=\cl{1,\ldots,N}-\mcj_2\rd{\ull}$ (resp. $\mcj_2\rd{\ull_\infty}=\mcj_2\rd{\ull}$). That is whenever $\ull$ satisfy bound on $n_1$ or $n_2$ characterizing semistability in proposition above, we saturate the other bound to get $\ull_\infty$. It is easy to see that $\ull_\infty$ are fixed by $\mbc^\times$-action and lies in the closure of the orbit through $\ull$. Therefore $\ull$ and $\ull_\infty$ will be mapped to the same point in the GIT quotient $\mcp^{SS}(\msl)\to \mcp\sslash \mbc^\times$ (see Theorem 3.21 of \cite{New78}).

It follows from Prop \ref{prop:polystab} that for $d$ with $\abs{d}<g-1$ and quadratic differential $q$ with simple zeros at $D$, the set of isomorphism classes of strictly polystable SU(1,2) Higgs bundles, i.e. those $(F,\bet,\gam)$ with $d_\bet=2(g-1-d)$ and $d_\gam=2(g-1+d)$ are bijective to the complement 
\be
\rd{\mcp\sslash\mbc^\times}\backslash \rd{\mcp^{\tx{S}}\rd{\msl}/\mbc^\times}=\rd{\mcp\sslash\mbc^\times}\backslash \rd{\mcy/\mbc^\times}\,.
\ee

\end{proof}

\section{An alternative description of spectral data}
\label{sec:alt}

Let $q$ be a quadratic differential on $X$ with simple zeros at $D=x_1+\ldots+x_{4g-4}$ and $d$ an integer with $\abs{d}<g-1$. Let $\mcy_{\ulD}$ be given as in Def \ref{def:partitionstab} we have stratification
\be \label{eq:strat}
\mcy=\coprod_{\ulD \tx{ stable}}\mcy_{\ulD}\,,
\ee
where each $\mcy_{\ulD}$ is a $\rd{\mbc^\times}^{d_r}$-bundle over $\pic^d X$ with $d_r=\deg D_r$. For fixed $L\in \pic^d X$, consider $\ull\in \mcy_{\ulD}$ with $p_{\mcy}(\ull)=L$. We have that $\ull=\rd{\ell_1,\ldots,\ell_{4g-4}}$ with $\ell_j\in \mbp\rd{\atp{V}{x_j}^\ast}$ where $V=L^{-2}K_X\oplus LK_X$. For $x_j\in D_r$ we have by definition of $\mcy_{\ulD}$, the line $\ell_j$ not lying in either summand of $\atp{V}{x_j}^\ast$. There is a unique nonzero element $b_{x_j}$ of fiber $\atp{L^3}{x_j}$ such that $\ell_j$ is spanned by $\rd{b_{x_j}s,s}$ with $s\in \atp{LK_X}{x_j}-\cl{0}$. It follows that we have
\be \label{eq:param}
\atp{\mcy_{\ulD}}{L}\cong \prod_{x\in D_r}\rd{\atp{L^3}{x}}^\times=\left\{\ulb=\rd{b_x}\middle|\,\, b_x\in \atp{L^3}{x}-\cl{0},\,\, x\in D_r\right\}\,.
\ee
Note that $\mcy_{\ulD}$ is invariant under $\mbc^\times$-action and we have that $\sig_c$ takes
\be
b_x\mapsto c b_x\,.
\ee
In this section we will see that the parameters $\rd{b_x}_{x\in D_r}$ corresponding to a stable SU(1,2) Higgs bundle $(F,\bet,\gam)$ on $X$ with $\gam\circ\bet=q$ are characterized by local frame near $x\in D_r$ in which Higgs field has a particular local form.

Let $\ull\in\mcy_{\ulD}$, $p_{\mcy}\rd{\ull}\in\pic^d X$ given by a line bundle $L$ of degree $d$ and $\ulb$ be the corresponding parameter in above identification. We will consider the stable SU(1,2) Higgs bundle $(\mcf_{\ull},\bet_{\ull},\gam_{\ull})$. By discussion in \S \ref{sec:construction} we have $\lamtwo \mcf_{\ull}\cong L^{-1}$, we fix an isomorphism and write $\lamtwo \mcf_{\ull}=L^{-1}$. 

Take $(D_j,\zeta_j)$ be holomorphic coordinate neighborhood centered at $x_j$ satisfying
\begin{itemize}
\item $x_i\notin D_j$ for $i\neq j$,
\item $\atp{q}{D_j}=\zeta_j \rd{d\zeta_j}^2$, and
\item $L_{D_j}$ is trivial
\end{itemize}

\begin{prop}
Let $(F,\bet,\gam)$ be a stable SU(1,2) Higgs bundle corresponding to the orbit in $\mcy/\mbc^\times$ through $\ull\in\mcy_{\ulD}$ with $p_{\mcy}(\ull)$ corresponding to line bundle $L$ of degree $d$ and $\gam\circ\bet=q$ with simple zeros at $D=x_1+\ldots+x_{4g-4}$. 

Then $\ull$ correspond to parameters $\ulb$ as in Eq.(\ref{eq:param}) if and only if there compatible holomorphic frames $s_{0,j}$ of $L=\lamtwo F^\ast$ and $\cl{s_{1,j}, s_{2,j}}$ of $F$ over $D_j$ (i.e. $s_{1,j}\wedge s_{2,j}=s_{0,j}^{-1}$) such that
\begin{itemize}
\item $s_{0,j}^{\otimes 3}=b_{x_j}$ at each $x_j\in D_r$, and
\item under such frames $\bet\in \Omega^{1,0}\rd{\tx{Hom}\rd{\lamtwo F^\ast,F}}$ and $\gam\in \Omega^{1,0}\rd{\tx{Hom}\rd{F,\lamtwo F^\ast}}$ have local forms
\be
\bet=\ov{\sqrt{2}}\pmt{1 \\ \zeta_j}d\zeta_j,\,\, \gam=\ov{\sqrt{2}}\pmt{\zeta_j & 1 }d\zeta_j\,.
\ee
\end{itemize}

\end{prop}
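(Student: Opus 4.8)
The plan is to localise the statement at each $x_j\in D_r$, write $(F,\bet,\gam)=(\mcf_{\ull},\bet_{\ull},\gam_{\ull})$ in terms of the Hecke presentation of \S\ref{sec:construction}, and then carry out a short normalisation over the discrete valuation ring $\mco_{X,x_j}$.

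First I would recall from \S\ref{sec:construction} that all of the data of $(F,\bet,\gam)$ is encoded by the Hecke inclusion $\eps=\eps_1\oplus\eps_2\colon F\hookrightarrow V=L^{-2}K_X\oplus LK_X$: here $\eps_2=\gam$, $\bet$ is recovered from $\eps_1$ through $\phi_{\mcf}$ as in Remark~\ref{rmk:locform}, $\lamtwo\eps=q$ for the fixed isomorphism $\lamtwo\mcf\cong L^{-1}$, and $\coker\eps\cong\mco_D=\bigoplus_j\mco_{\{x_j\}}$ has $j$-th component $\tx{ev}_{\xi_j}$ for any nonzero $\xi_j$ spanning the line $\ell_j\subset V^\ast|_{x_j}$; equivalently $\eps(F)|_{D_j}=\{v\in V|_{D_j}\colon\xi_j(v(x_j))=0\}$. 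A frame $s_{0,j}$ of $L$ on $D_j$ trivialises $V|_{D_j}\cong\mco_{D_j}^{\oplus2}$ via $s_{0,j}^{-2}\,d\zeta_j$ and $s_{0,j}\,d\zeta_j$, and in this trivialisation together with a compatible frame $\{s_{1,j},s_{2,j}\}$ of $F$, Remark~\ref{rmk:locform} expresses $\eps=\pmt{\bet_2 & -\bet_1\\\gam_1 & \gam_2}$, where $\bet=\pmt{\bet_1\\\bet_2}d\zeta_j$, $\gam=\pmt{\gam_1 & \gam_2}d\zeta_j$; moreover $\det\eps=\gam_1\bet_1+\gam_2\bet_2=\zeta_j$ since $\atp{q}{D_j}=\zeta_j(d\zeta_j)^2$.

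The key translation is that, for $x_j\in D_r$ and a frame $s$ of $L$ on $D_j$, the identification of Eq.~(\ref{eq:param}) says (up to the labelling conventions of \S\ref{sec:proj}) that $\xi_j$ is proportional to $\rd{b_{x_j}/s^{\otimes3}|_{x_j},\,1}$ in the induced trivialisation of $V^\ast|_{x_j}$. For the forward implication, given $\ull\leftrightarrow\ulb$, I would choose for each $x_j\in D_r$ a frame $s_{0,j}$ of $L$ on $D_j$ with $s_{0,j}^{\otimes3}=b_{x_j}$ (a cube root); then $\xi_j$ spans $(1,1)$, so $\eps(F)|_{D_j}=\{(u,v)\colon u(x_j)+v(x_j)=0\}$, and over the DVR $\mco_{X,x_j}$ one checks that $\tfrac1{\sqrt2}(\zeta_j,\zeta_j)$ and $\tfrac1{\sqrt2}(-1,1)$ form a basis of this submodule (their transition to the obvious basis $(1,-1),(0,\zeta_j)$ has determinant $2$, a unit). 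Defining $s_{1,j},s_{2,j}$ as $\eps^{-1}$ of these, the identity $\lamtwo\eps=q$ forces $s_{1,j}\wedge s_{2,j}=s_{0,j}^{-1}$ (compatibility), and reading the resulting matrix $\eps=\tfrac1{\sqrt2}\pmt{\zeta_j & -1\\\zeta_j & 1}$ through Remark~\ref{rmk:locform} gives exactly $\bet=\tfrac1{\sqrt2}\pmt{1\\\zeta_j}d\zeta_j$ and $\gam=\tfrac1{\sqrt2}\pmt{\zeta_j & 1}d\zeta_j$, the scalars $\tfrac1{\sqrt2}$ being forced by $\gam_1\bet_1+\gam_2\bet_2=\zeta_j$. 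Conversely, if such frames exist then $\eps$ has this matrix in them, so $\tx{im}\,\eps(x_j)$ is spanned by $(-1,1)$ and hence $\xi_j$, and $\ell_j$, by $(1,1)$ in the $s_{0,j}$-trivialisation of $V^\ast$; by the displayed translation this says $\ell_j$ corresponds to $s_{0,j}^{\otimes3}=b_{x_j}$, and since the components of $\ull$ over $x_j\in D_\bet$ and over $x_j\in D_\gam$ are already the distinguished sections fixed by $\ulD$, this yields $\ull\leftrightarrow\ulb$.

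The only real difficulty is bookkeeping: one must carry the $K_X$- and $L$-twists, the duality $\mbp(\mcw)\leftrightarrow\mbp(\mcw^\ast)$ of \S\ref{sec:proj}, the passage between $\eps_1$ and $\bet$ via $\phi_{\mcf}$, and the normalisation $\atp{q}{D_j}=\zeta_j(d\zeta_j)^2$ consistently through every step, so that the cube $s_{0,j}^{\otimes3}=b_{x_j}$ and the scalar $\tfrac1{\sqrt2}$ come out exactly rather than up to unspecified units; everything else is a one-line computation over $\mco_{X,x_j}$. Stability plays no role in this local argument beyond the earlier sections, which guarantee that $\ull$ indeed produces $(\mcf_{\ull},\bet_{\ull},\gam_{\ull})$.
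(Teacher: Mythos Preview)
Your proposal is correct and follows essentially the same route as the paper's own proof: both arguments pick a frame $s_{0,j}$ of $L$ with $s_{0,j}^{\otimes 3}(x_j)=b_{x_j}$, trivialise $V$ by $\sigma_{1,j}=s_{0,j}^{-2}d\zeta_j$, $\sigma_{2,j}=s_{0,j}d\zeta_j$ so that $\xi_j$ becomes $(1,1)$, write down the explicit basis $\tfrac{1}{\sqrt2}(\zeta_j,\zeta_j)$, $\tfrac{1}{\sqrt2}(-1,1)$ of $\ker(\tx{ev}_{\xi_j})$, and then read off $\bet,\gam$ via Remark~\ref{rmk:locform}, with the converse obtained by reversing these steps. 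Your added remarks---checking that the transition to the obvious basis has unit determinant, and observing that $\det\eps=\zeta_j$ pins down the $\tfrac{1}{\sqrt2}$---are exactly the bookkeeping the paper leaves implicit.
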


\begin{proof}
Suppose $\ull$ correspond to $\ulb$. Let $s_{0,j}\in L_{D_j}$ be a nowhere vanishing holomorphic section such that $s_{0,j}^{\otimes 3}=b_{x_j}$ at $x_j$ and let
\begin{align*}
& \sig_{1,j}=\rd{s_{0,j}^{-2}d\zeta_j,0}\,,\\
& \sig_{2,j}=\rd{0,s_{0,j}d\zeta_j}
\end{align*}
giving a trivialization of $V=L^{-2}K_X\oplus LK_X$ over $D_j$ and let $\sig_{1,j}^\ast$, $\sig_{2,j}^\ast$ be dual frame. We have that
\be
\xi_j=\sig_{1,j}^\ast+\sig_{2,j}^\ast
\ee
is a nonzero point on the line $\ell_j\in \atp{V}{x_j}^\ast$. By discussion at the end of \S \ref{sec:construction}, after applying $\jmath_{\ull}^\ast$ to the quotient map $\pi$ in short exact sequence in Eq.(\ref{eq:ses}) where $\jmath_{\ull}: x\mapsto \rd{x,\ull}$, we get up to post-composition by automorphism of the skyscraper sheaf $\mco_{\cl{x_j}}$, under trivialization $\tau$ given by $\cl{\sig_{1,j},\sig_{2,j}}$,
\begin{center}
\begin{tikzcd}
V_{D_j} \arrow[rrr,"\sum_j\tx{ev}_{\xi_j}"] \arrow[d,"\tau" left] & &  & \mco_{\cl{x_j}} \arrow[d,equal] \\
\mco_{D_j}^{\oplus 2} \arrow[rrr,"\rd{f_1,f_2}\mapsto f_1(0)+f_2(0)" below] & & & \mco_{\cl{x_j}}
\end{tikzcd}
\end{center}
Let
\begin{align}
& \eta_{1,j}=\frac{\zeta_j}{\sqrt{2}}\rd{\sig_{1,j}+\sig_{2,j}}\,,\nonumber \\
& \eta_{2,j}=\frac{1}{\sqrt{2}}\rd{-\sig_{1,j}+\sig_{2,j}}
\end{align}
we see that $\cl{\eta_{1,j},\eta_{2,j}}$ freely generates $\eps_{\ull}\rd{\mcf_{\ull}}=\ker\pi$ over $D_j$. Let $s_{1,j}$, $s_{2,j}\in \Gamma\rd{D_j,\mcl_{\ull}}$ such that $\eps_{\ull}: s_{i,j}\mapsto \eta_{i,j}$. By direct calculation we have
\be
\lamtwo \eps_{\ull}: s_{1,j}\wedge s_{2,j}\mapsto \zeta_j \rd{d\zeta_j}^2 s_{0,j}^{-1}=q\otimes s_{0,j}^{-1}
\ee
therefore by Lemma \ref{lem:det} we have $s_{1,j}\wedge s_{2,j}=s_{0,j}^{-1}$. Furthermore under above choice of trivializations of $\mcf_{\ull}$ and $V$ over $D_j$ we have local form
\be \label{eq:epsloc}
\eps_{\ull}=\ov{\sqrt{2}}\pmt{\zeta_j & -1 \\ \zeta_j & 1}
\ee
The result of local form now follows from calculation in Remark \ref{rmk:locform}.

Conversely given compatible frames $s_{0,j}$ of $L=\lamtwo \mcf_{\ull}^\ast$ and $\cl{s_{1,j},s_{2,j}}$ of $\mcf_{\ull}$ with local forms of $\bet_{\ull}$ and $\gam_{\ull}$ as in statement. By same calculation in Remark \ref{rmk:locform}, local form of $\eps_{\ull}: \mcf_{\ull}\to V$ is given by Eq.(\ref{eq:epsloc}) and it is straightforward to see that we must have $\ell_j$ given by the line in $\mbp\rd{\atp{V}{x_j}^\ast}$ through $\xi_j=\sig_{1,j}^\ast+\sig_{2,j}^\ast$. By discussion above this correspond to the parameter $b_{x_j}$.
\end{proof}

\bibliographystyle{alpha}
\bibliography{ref}

\end{document}